\tikzstyle{vertex}=[circle, draw, inner sep=0pt, minimum size=8pt] 
\newcommand{\vertex}{\node[vertex]}
\newtheorem{theorem}{Theorem}[section]
\newtheorem{lemma}[theorem]{Lemma}
\newtheorem{proposition}[theorem]{Proposition}
\newtheorem{corollary}[theorem]{Corollary}
\theoremstyle{definition}
\theoremstyle{remark}
\newtheorem{remark}[theorem]{Remark}
\numberwithin{equation}{section}
\begin{document}

\title[Zero-Annihilator Graphs of Commutative rings]{Zero-Annihilator Graphs of Commutative rings}
\author[Mostafanasab]{Hojjat Mostafanasab}

\subjclass[2010]{Primary: 13A15; 16N40}
\keywords{Commutative rings, Zero-annihilator graphs}

\begin{abstract}
Assume that $R$ is a commutative ring with nonzero identity.
In this paper, we introduce and investigate {\it zero-annihilator graph of} $R$
denoted by $\mathtt{ZA}(R)$. It is the graph whose vertex set is the set of all nonzero nonunit elements of
$R$ and two distinct vertices $x$ and $y$ are adjacent whenever $${\rm Ann}_R(x)\cap {\rm Ann}_R(y)=\{0\}.$$
\end{abstract}

\maketitle

\section{\protect\bigskip Introduction}

Throughout this paper all rings are commutative with nonzero identity.
In \cite{Bec}, Beck associated to a ring $R$ its zero-divisor graph $G(R)$ whose vertices are the zero-divisors
of $R$ (including 0), and two distinct vertices $x$ and $y$ are adjacent if $xy=0$. Later, in \cite{AL}, Anderson and Livingston
studied the subgraph $\Gamma(R)$ (of $G(R)$) whose vertices are the nonzero zero-divisors of $R$. In the recent years, several researchers have done interesting and enormous works on this field of study. For instance, see \cite{AN,Bad,Red}. 
The concept of co-annihilating ideal graph of a ring $R$, denoted by $\mathcal{A}_R$ was introduced by Akbari et al.
in \cite{AAAS}. As in \cite{AAAS}, {\it co-annihilating ideal graph of} $R$,
denoted by  $\mathcal{A}_R$,  is a graph whose vertex set is the set of
all non-zero proper ideals of $R$ and two distinct vertices $I$ and $J$ are adjacent whenever
${\rm Ann}_R(I)\cap {\rm Ann}_R(J)=\{0\}$. 
In the present paper, we introduce {\it zero-annihilator graph of} $R$
denoted by $\mathtt{ZA}(R)$. It is the graph whose vertex set is the set of all nonzero nonunit elements of
$R$ and two distinct vertices $x$ and $y$ are adjacent whenever ${\rm Ann}_R(Rx+Ry)={\rm Ann}_R(x)\cap {\rm Ann}_R(y)=\{0\}$. 

Let $G$ be a simple graph with the vertex set ${\rm V}(G)$ and edge set ${\rm E}(G)$. For every vertex
$v \in {\rm V}(G)$, ${\rm N}_G(v)$ is the set $\{u\in {\rm V}(G)\mid uv\in {\rm E}(G)\}$. The {\it degree} of a vertex $v$ is defined
as ${\rm deg}_G(v)=|{\rm N}_G(v)|$. The {\it minimum degree} of $G$ is denoted by $\delta(G)$. 
Recall that a graph $G$ is {\it connected} if there is a path between every
two distinct vertices. For distinct vertices $x$ and $y$ of a connected graph $G$, let ${\rm d}_G(x, y)$ be
the length of the shortest path from $x$ to $y$. The {\it diameter} of a connected graph $G$ is ${\rm diam}(G) = {\rm sup}\{{\rm d}_G(x, y) \mid
x \mbox{ and } y \mbox{ are distinct vertices of } G\}$. The {\it girth} of $G$, denoted by ${\rm girth}(G)$, is
defined as the length of the shortest cycle in $G$ and ${\rm girth}(G) = \infty$ if $G$ contains
no cycles. 
A {\it bipartite graph} is a graph all of whose vertices can be partitioned into two parts $U$ and $V$ such that
every edge joins a vertex in $U$ to a vertex in $V$. 
A {\it complete bipartite graph} is a bipartite graph with parts $U,V$ such that every vertex in $U$ is adjacent to every vertex in $V$.
A graph in which all vertices have degree $k$ is called a $k$-{\it regular graph}. 
A graph in which each pair of distinct vertices is joined by an
edge is called a {\it complete graph}. Also, if a graph $G$ contains one vertex to
which all other vertices are joined and $G$ has no other edges, is called a {\it star
graph}. A {\it clique} in a graph $G$ is a subset of pairwise adjacent vertices and
the number of vertices in a maximum clique of $G$, denoted
by $\omega(G)$, is called the {\it clique number} of $G$. 
Obviously, $\chi(G)\geq\omega(G)$.

\bigskip

\section{Some properties of $\mathtt{ZA}(R)$}

Recall that, an {\it empty graph} is a graph with no edges. A {\it B\'{e}zout ring} is a ring in which all finitely generated ideals are principal.

\begin{theorem}\label{first}
Let $R$ be a ring. 
If $\mathtt{ZA}(R)$ is an empty graph, then $R$ is a local ring and $Ann_R(x)\neq\{0\}$
for every nonunit element $x\in R$. The converse is true if $R$ is a B\'{e}zout ring.
\end{theorem}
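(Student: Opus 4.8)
The plan is to prove the two implications separately; the forward implication needs no extra hypothesis on $R$, while the Bézout condition enters the converse at exactly one point, namely to replace a two-generated ideal by a principal one so that the hypothesis ``${\rm Ann}_R(x)\neq\{0\}$ for every nonunit $x$'' can be applied to a single element.

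For the forward direction, assume $\mathtt{ZA}(R)$ has no edge. I would first show $R$ is local, by contradiction. If $\mathfrak{m}_1\neq\mathfrak{m}_2$ are distinct maximal ideals, comaximality yields $a\in\mathfrak{m}_1$ and $b\in\mathfrak{m}_2$ with $a+b=1$. A short verification shows $a$ and $b$ are nonzero nonunits: membership in a maximal ideal forces $a$ and $b$ to be nonunits, $a=0$ would give $b=1\in\mathfrak{m}_2$ and $b=0$ would give $a=1\in\mathfrak{m}_1$, and moreover $a\neq b$ since $a=b$ would give $2a=1$ and make $a$ a unit. Finally, if $ra=rb=0$ then $r=r(a+b)=0$, so ${\rm Ann}_R(a)\cap{\rm Ann}_R(b)=\{0\}$, i.e. $a$ and $b$ are adjacent, contradicting emptiness. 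Hence $R$ is local.

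Next, for the annihilator statement, suppose $x$ is a nonunit with ${\rm Ann}_R(x)=\{0\}$. Since ${\rm Ann}_R(0)=R$, we have $x\neq0$, so $x$ is a vertex; and for every other vertex $y$ one has ${\rm Ann}_R(x)\cap{\rm Ann}_R(y)\subseteq{\rm Ann}_R(x)=\{0\}$, so non-adjacency of $x$ to all other vertices forces $x$ to be the \emph{only} nonzero nonunit of $R$. Then $x^2$ must lie in $\{0\}\cup U(R)\cup\{x\}$, where $U(R)$ is the set of units, and each case is impossible: $x^2=0$ gives $x\in{\rm Ann}_R(x)$; $x^2\in U(R)$ makes $x$ a unit; and $x^2=x$ gives $x-1\in{\rm Ann}_R(x)$, hence $x=1$. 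So ${\rm Ann}_R(x)\neq\{0\}$ for every nonunit $x$, completing the forward direction.

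For the converse, assume $R$ is Bézout and local with maximal ideal $\mathfrak{m}$, and ${\rm Ann}_R(x)\neq\{0\}$ for every nonunit $x$. Let $x,y$ be distinct nonzero nonunits; both lie in $\mathfrak{m}$, and the Bézout property gives $Rx+Ry=Rz$ for some $z$, which then lies in $\mathfrak{m}$ and is therefore a nonunit. Using the identity built into the definition of $\mathtt{ZA}(R)$, ${\rm Ann}_R(x)\cap{\rm Ann}_R(y)={\rm Ann}_R(Rx+Ry)={\rm Ann}_R(Rz)={\rm Ann}_R(z)\neq\{0\}$, so $x$ and $y$ are not adjacent; since the pair was arbitrary, $\mathtt{ZA}(R)$ has no edge. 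I expect the only genuinely delicate step to be the edge case in the forward direction, namely a ring whose unique nonzero nonunit has zero annihilator, which the idempotent argument above rules out; everything else is routine bookkeeping with annihilators and the definition of adjacency.
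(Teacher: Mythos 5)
Your proof is correct and follows essentially the same route as the paper: comaximality of two distinct maximal ideals produces an adjacent pair $a,b$ with $a+b=1$, and the converse uses the B\'ezout property to write $Rx+Ry=Rz$ with $z$ a nonunit and then applies the hypothesis ${\rm Ann}_R(z)\neq\{0\}$. The only divergence is the sub-step showing ${\rm Ann}_R(x)\neq\{0\}$: the paper observes that $\{x^n\mid n\in\mathbb{N}\}$ would be an infinite clique, while you reduce to $x$ being the unique nonzero nonunit and eliminate the three possibilities for $x^2$ --- both arguments ultimately rest on $x$ and $x^2$ furnishing an edge, and yours is, if anything, the more carefully justified of the two.
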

\begin{proof}
Assume that $\mathtt{ZA}(R)$ is empty. Let $\mathfrak{m}_1,\mathfrak{m}_2$ be two distinct maximal ideals of $R$.
Then $\mathfrak{m}_1+\mathfrak{m}_2=R$ implies that there exist $x\in \mathfrak{m}_1$ and $x_2\in \mathfrak{m}_2$ such that
$x+y=1$. So $x$ and $y$ are adjacent, which is a contradiction. Hence $R$ is a local ring.
Let $\mathfrak{m}$ be the maximal ideal of $R$ and $x$ be an element of $\mathfrak{m}$.
Suppose that ${\rm Ann}_R(x)=\{0\}$. Then $\{x^n|n\in\mathbb{N}\}$ is an infinite clique in $\mathtt{ZA}(R)$  that is a contradiction.
So ${\rm Ann}_R(x)\neq\{0\}$.\\
Suppose that $R$ is a local B\'{e}zout ring and $Ann_R(x)\neq\{0\}$
for every nonunit element $x\in R$. Let $x,y$ be two vertices in  $\mathtt{ZA}(R)$. Then $x,y\in\mathfrak{m}$. Hence 
$Rx+Ry=Rz$ for some nonzero nonunit element $z\in R$. So $x,y$ are not adjacent which shows that  $\mathtt{ZA}(R)$ is empty.
\end{proof}

\begin{remark}
Suppose that $R$ has a nontrivial idempotent element $e$. Then $e+(1-e)=1$ implies that $e$ and $1-e$
are adjacent. Hence ${\rm deg}_{\mathtt{ZA}(R)}(e)\geq1$ and so $\mathtt{ZA}(R)$ is not an empty graph.
\end{remark}

\begin{remark}
Let $R$ be a ring. 
Notice that if $R$ is an Artinian ring or a Boolean ring, then ${\rm dim}(R)=0$.
By \cite[Theorem 3.4]{AD}, ${\rm dim}(R)=0$ if and only if for every $x\in R$ there exists a positive integer $n$ such that $x^{n+1}$ divides $x^n$. Therefore, every nonzero nonunit element of a zero-dimensional ring has a nonzero annihilator.
 Hence, if $R$ is a zero-dimensional chained ring, then $\mathtt{ZA}(R)$ is an empty graph.
 
\end{remark}

Let ${\rm Z}^*(R)$ denote the zero divisors of $R$ and ${\rm Z}(R)=\rm{Z}^*(R)\cup\{0\}$. 
\begin{theorem}
Let $R$ be a ring and $S$ be a multiplicative closed subset of $R$ such that $S\cap {\rm Z}(R)=\{0\}$.
Then $\mathtt{ZA}(R)\simeq\mathtt{ZA}(R_S)$.
\end{theorem}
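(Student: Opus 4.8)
The plan is to reduce adjacency in both graphs to a statement about annihilators of elements of $R$ itself, and then to match the two vertex sets class by class. I would begin by unwinding the hypothesis: it says exactly that $S$ contains no nonzero zero-divisor, so every element of $S$ is regular, the canonical map $\varphi\colon R\to R_S$, $x\mapsto x/1$, is injective, and we may regard $R$ as a subring of $R_S$. It is worth flagging immediately that $\varphi$ is \emph{not} surjective on vertex sets in general --- $r/s$ equals $\varphi(r)\,(s/1)^{-1}$ with $s/1$ a unit of $R_S$, but need not itself lie in $\varphi(R)$ --- so the isomorphism we want will not simply be $\varphi$.

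The key local computation is that for $r\in R$, $s\in S$ one has $\mathrm{Ann}_{R_S}(r/s)=\mathrm{Ann}_{R_S}(r/1)=\bigl(\mathrm{Ann}_R(r)\bigr)_S$: the first equality because $1/s$ is a unit, the second because $(a/t)(r/1)=0$ in $R_S$ forces $uar=0$ for some $u\in S$, whence $ar=0$ since $u$ is regular. From this I would extract two facts. First, since $J_S=\{0\}$ iff $J=\{0\}$ for a set $S$ of regular elements, the ideal $\mathrm{Ann}_R(r)$ depends only on the element $r/s$ of $R_S$ and not on the chosen representative (if $r/s=r'/s'$ then $rs'=r's$, and regularity of $s,s'$ forces $\mathrm{Ann}_R(r)=\mathrm{Ann}_R(r')$); write $\mathfrak a(\alpha)$ for this ideal of $R$ when $\alpha=r/s$. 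Second, since localization commutes with finite intersections, distinct vertices $\alpha,\beta$ of $\mathtt{ZA}(R_S)$ are adjacent iff $\bigl(\mathfrak a(\alpha)\cap\mathfrak a(\beta)\bigr)_S=\{0\}$, i.e.\ iff $\mathfrak a(\alpha)\cap\mathfrak a(\beta)=\{0\}$. Thus $\mathtt{ZA}(R)$ (labelled by $x\mapsto\mathrm{Ann}_R(x)$) and $\mathtt{ZA}(R_S)$ (labelled by $\alpha\mapsto\mathfrak a(\alpha)$) are both governed by the same rule over $R$: two vertices are adjacent precisely when their label ideals meet in $\{0\}$.

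Now it suffices to build a bijection of vertex sets preserving the $R$-labels. For an ideal $I$ of $R$, set $V_I=\{x\in R\mid x\neq0,\ x\ \text{nonunit},\ \mathrm{Ann}_R(x)=I\}$ and $W_I=\{\alpha\in R_S\mid\alpha\neq0,\ \alpha\ \text{nonunit},\ \mathfrak a(\alpha)=I\}$, so that $\mathrm{V}(\mathtt{ZA}(R))=\bigsqcup_I V_I$ and $\mathrm{V}(\mathtt{ZA}(R_S))=\bigsqcup_I W_I$. Using the correspondence $I\leftrightarrow I_S$ together with $I_S\cap R=I$ --- and here the hypothesis on $S$ is what guarantees no label ideal is lost or created --- I would check that $\{I\mid V_I\neq\emptyset\}=\{I\mid W_I\neq\emptyset\}$. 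Given that, choosing for each such $I$ a bijection $f_I\colon V_I\to W_I$ and taking $f=\bigcup_I f_I$ yields a graph isomorphism, because $\mathfrak a(f_I(x))=I=\mathrm{Ann}_R(x)$ and adjacency depends only on these labels.

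The substantive point --- the step I expect to be the main obstacle --- is the equality $|V_I|=|W_I|$. One inclusion is free: $x\mapsto x/1$ injects $V_I$ into $W_I$. For the other direction the crucial observation is that multiplication by a regular element preserves annihilators, so $S\cdot V_I\subseteq V_I$; consequently, for fixed $x\in V_I$, $x/s=x/s'$ iff $sx=s'x$, so $\{x/s\mid s\in S\}$ is in bijection with $Sx\subseteq V_I$ and hence has size at most $|V_I|$. Since $W_I=\bigcup_{x\in V_I}\{x/s\mid s\in S\}$, this forces $|W_I|\le|V_I|\cdot|V_I|=|V_I|$ when $V_I$ is infinite, giving equality; and when $V_I$ is finite, multiplication by each $s\in S$ is an injection of the finite set $V_I$ into itself, hence a bijection, so $x/s=(sx')/s=x'/1$ for suitable $x'\in V_I$ and $x\mapsto x/1$ is already onto $W_I$. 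In either case $|V_I|=|W_I|$, which completes the construction of $f$.
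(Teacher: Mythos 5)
Your reduction of adjacency in $\mathtt{ZA}(R_S)$ to the label ideals $\mathfrak a(\alpha)=\mathrm{Ann}_R(r)$ is correct, and you are right to flag that the obvious map $x\mapsto x/1$ need not be onto the vertex set of $\mathtt{ZA}(R_S)$ (the paper's own proof uses exactly that map and checks only injectivity and adjacency, so it overlooks this and the issue below). The gap in your argument is the assertion that $x\mapsto x/1$ injects $V_I$ into $W_I$: this tacitly assumes that $x/1$ is a nonunit of $R_S$. If $x/1$ were a unit then $xz\in S$ for some $z\in R$, which forces $x$ to be regular; so the assumption is harmless for $I\neq\{0\}$, but for $I=\{0\}$ --- the class of regular nonunits of $R$ --- it can fail outright, and then $V_{\{0\}}$ may be infinite while $W_{\{0\}}$ is empty, so no family of bijections $f_I$ exists. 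Your finite-case argument cannot rescue this, since a nonunit $x$ with $\mathrm{Ann}_R(x)=\{0\}$ has pairwise distinct powers $x^n$, so $V_{\{0\}}$ is automatically infinite whenever it is nonempty.

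The gap is not repairable, because the statement is false under the only coherent reading of the hypothesis (namely that $S$ consists of regular elements). Take $R=\mathbb{Z}$ and $S=\mathbb{Z}\setminus\{0\}$: then $R_S=\mathbb{Q}$, so $\mathtt{ZA}(R)$ is an infinite complete graph while $\mathtt{ZA}(R_S)$ has no vertices at all. For an example with both graphs nonempty, take $R=\mathbb{Z}\times\mathbb{Z}$ and $S=\{(n,n)\mid n\neq0\}$, so $R_S\simeq\mathbb{Q}\times\mathbb{Q}$: the vertices $(2,1)$, $(3,1)$, $(1,2)$ all have zero annihilator and form a triangle in $\mathtt{ZA}(R)$, whereas $\mathtt{ZA}(\mathbb{Q}\times\mathbb{Q})$ is complete bipartite, hence triangle-free. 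What your argument genuinely proves is that the subgraphs induced on the vertices with nonzero annihilator are isomorphic; the theorem becomes correct if one adds a hypothesis forcing $V_{\{0\}}$ and $W_{\{0\}}$ to be equipotent, for instance that every regular element of $R$ is a unit (so that both classes are empty).
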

\begin{proof}
Define the vertex map $\Phi:{\rm V}(\mathtt{ZA}(R))\longrightarrow {\rm V}(\mathtt{ZA}(R_S))$ by $x\mapsto \frac{x}{1}$. 
We can easily verify that $x=y$ if and only if $\frac{x}{1}=\frac{y}{1}$. Also, it is easy to see that
${\rm Ann}_R(x)\cap {\rm Ann}_R(y)=\{0\}$ if and only if ${\rm Ann}_{R_S}(\frac{x}{1})\cap {\rm Ann}_{R_S}(\frac{y}{1})=\{\frac{0}{1}\}$.
\end{proof}

\begin{theorem}\label{finite}
Let $R$ be a B\'{e}zout ring with $|{ Max}(R)|<\infty$ such that $\delta(\mathtt{ZA}(R))>0$. Then
$\mathtt{ZA}(R)$ is a finite graph if and only if 
every vertex of $\mathtt{ZA}(R)$ has finite degree.
\end{theorem}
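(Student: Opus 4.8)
The plan is to prove the nontrivial (reverse) implication by contradiction: assuming every vertex of $\mathtt{ZA}(R)$ has finite degree but $\mathtt{ZA}(R)$ has infinitely many vertices, I would produce a vertex of infinite degree. The forward implication needs nothing, since a graph with finitely many vertices automatically has all degrees finite.

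First I would exploit the B\'ezout hypothesis to pin down what adjacency means. For two vertices $x,y$, write $Rx+Ry=Rz$; then ${\rm Ann}_R(Rx+Ry)={\rm Ann}_R(z)$, so $x$ and $y$ are adjacent exactly when $z$ is a non-zero-divisor (note $z\neq 0$ automatically, as $x\in Rz$ and $x\neq 0$). The crucial observation is that, under the hypothesis that all degrees are finite, $R$ cannot contain a non-zero-divisor nonunit $z$: the powers $z,z^{2},z^{3},\dots$ would be pairwise distinct nonzero nonunits with $Rz^{i}+Rz^{j}=Rz^{\min(i,j)}$ generated by the non-zero-divisor $z^{\min(i,j)}$, hence an infinite clique in $\mathtt{ZA}(R)$, contradicting the finiteness of $\deg_{\mathtt{ZA}(R)}(z)$. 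Therefore $x\sim y$ holds if and only if $Rx+Ry=R$, i.e. if and only if no maximal ideal of $R$ contains both $x$ and $y$.

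Next I would pass to a finite combinatorial model. Writing ${\rm Max}(R)=\{\mathfrak m_{1},\dots,\mathfrak m_{n}\}$, assign to each nonzero nonunit $x$ the set $S(x)=\{i:x\in\mathfrak m_{i}\}\subseteq\{1,\dots,n\}$, which is nonempty since $x$ is a nonunit; by the previous paragraph, $x$ and $y$ are adjacent precisely when $S(x)\cap S(y)=\emptyset$. If $\mathtt{ZA}(R)$ had infinitely many vertices, then since $S(\cdot)$ takes at most $2^{n}$ values, some $T\subseteq\{1,\dots,n\}$ would satisfy $S(x)=T$ for infinitely many vertices $x$. Choose one such $x_{0}$ and, using $\delta(\mathtt{ZA}(R))>0$, a neighbor $y_{0}$ of $x_{0}$; then $S(y_{0})\cap T=S(y_{0})\cap S(x_{0})=\emptyset$, so $y_{0}$ is adjacent to every vertex carrying the label $T$, giving $\deg_{\mathtt{ZA}(R)}(y_{0})=\infty$, a contradiction. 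Hence $\mathtt{ZA}(R)$ has only finitely many vertices and, being simple, is a finite graph.

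The step I expect to be the real obstacle is the reduction of adjacency to comaximality --- specifically, excluding non-zero-divisor nonunits by the "infinite clique of powers" argument, which is where the finite-degree hypothesis and the B\'ezout property are used together. Once adjacency is purely ideal-theoretic, the conclusion is a short pigeonhole argument resting on $|{\rm Max}(R)|<\infty$ and $\delta(\mathtt{ZA}(R))>0$. I would also record the degenerate case in which $\mathtt{ZA}(R)$ has no vertices at all (for instance when $R$ is a field), where the statement is trivial.
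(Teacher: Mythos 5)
Your proof is correct, and its second half takes a genuinely different route from the paper's. The first step is shared in spirit: both arguments dispose of nonzero nonunits $x$ with ${\rm Ann}_R(x)=\{0\}$ (the paper notes such an $x$ would be adjacent to everything and so, having finite degree, forces the graph to be finite; you rule such $x$ out via the infinite clique $\{x^n\}$ of powers, which also works and is the device the paper uses elsewhere). After that the paths diverge. The paper uses $\delta(\mathtt{ZA}(R))>0$ together with the B\'ezout hypothesis to show ${\rm Jac}(R)=\{0\}$, invokes the Chinese Remainder Theorem to write $R\simeq F_1\times\cdots\times F_n$ as a product of fields, and then shows each $F_i$ is finite by exhibiting a single vertex (such as $(0,1,\dots,1)$) adjacent to all $(u,0,\dots,0)$. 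You instead convert adjacency into the purely combinatorial condition $S(x)\cap S(y)=\emptyset$ on supports over the finitely many maximal ideals and finish with a pigeonhole: infinitely many vertices would share a label $T$, and any neighbor of one of them would be adjacent to all of them. Your reduction of adjacency to comaximality is justified correctly (writing $Rx+Ry=Rz$, noting $z\neq 0$, and using that every nonzero nonunit has nonzero annihilator), and every hypothesis --- B\'ezout, $|{\rm Max}(R)|<\infty$, and $\delta(\mathtt{ZA}(R))>0$ --- is used where it should be. What your approach buys is independence from the structure theorem: you never need $R$ to actually decompose as a product of fields, only that supports live in a finite set, which makes the argument more self-contained; the paper's approach buys an explicit description of $R$ that it then reuses in several subsequent theorems.
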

\begin{proof}
The ``only if'' part is evident.\\
Suppose that each vertex of $\mathtt{ZA}(R)$ has finite degree. If ${\rm Ann}_R(x)=\{0\}$ for
some nonzero nonunit element $x\in R$, then $x$ is adjacent to all vertices of $\mathtt{ZA}(R)$ 
that implies $\mathtt{ZA}(R)$ is a finite graph. Assume that ${\rm Ann}_R(x)\neq\{0\}$ for each nonzero nonunit element $x\in R$. 
We claim that ${\rm Jac}(R)=\{0\}$. On the contrary, assume that there exists a nonzero element $a\in {\rm Jac}(R)$. 
Since $\mathtt{ZA}(R)$ has no isolated vertex, $a$ is adjacent to another vertex, say $b$.
Since $R$ is a B\'{e}zout ring, $Ra+Rb$ is generated by a nonzero nonunit element $c$ of $R$
and so ${\rm Ann}_R(Ra+Rb)={\rm Ann}_R(c)\neq\{0\}$, which is impossible. So ${\rm Jac}(R)=\{0\}$. 
Hence by Chinese Remainder Theorem we have $R\simeq F_1 \times F_ 2\times\cdots\times F_n$, where 
$F_i$'s are fields and $n=|{\rm Max}(R)|$. 
Let $0\neq u\in F_1 $. Then $(u,0,\dots,0)$ and $(0,1,\dots,1)$ are adjacent.
Since $(0,1,\dots,1)$ has finite degree, so $F_1$ is a finite field.
Similarly we can show that $F_i$'s are finite fields.
Consequently $R$ has finitely many nonzero nonunit elements and the proof is
complete. 
\end{proof}

\begin{theorem}
Let $R$ be a B\'{e}zout ring with $|{ Max}(R)|<\infty$. 
Then the following conditions are equivalent:
\begin{enumerate}
\item $\mathtt{ZA}(R)$ is a bipartite graph with $\delta(\mathtt{ZA}(R))>0$;
\item $\mathtt{ZA}(R)$ is a complete bipartite graph;
\item $R\simeq F_1\times F_2$ where $F_1$ and $F_2$ are two fields.
\end{enumerate}
\end{theorem}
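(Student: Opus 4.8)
The plan is to establish the cycle of implications $(3)\Rightarrow(2)\Rightarrow(1)\Rightarrow(3)$. The first two implications are routine. For $(3)\Rightarrow(2)$, I would write $R=F_1\times F_2$ and note that the nonzero nonunit elements of $R$ are exactly those of the form $(a,0)$ with $0\neq a\in F_1$ or $(0,b)$ with $0\neq b\in F_2$; since ${\rm Ann}_R(a,0)=0\times F_2$ and ${\rm Ann}_R(0,b)=F_1\times 0$, one checks that two vertices are adjacent precisely when one lies in $U:=\{(a,0):a\neq0\}$ and the other in $V:=\{(0,b):b\neq0\}$, so $\mathtt{ZA}(R)$ is the complete bipartite graph on the parts $U,V$. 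For $(2)\Rightarrow(1)$, a complete bipartite graph (with both parts nonempty) is bipartite and has $\delta=\min(|U|,|V|)\geq1>0$.

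The substance is $(1)\Rightarrow(3)$, and here I would proceed as follows. First record that a bipartite graph has no odd cycle, hence no triangle, hence no clique of size $3$. Using this I would show that ${\rm Ann}_R(x)\neq\{0\}$ for every nonzero nonunit $x$: if ${\rm Ann}_R(x)=\{0\}$ then an easy induction gives ${\rm Ann}_R(x^n)=\{0\}$ for all $n$, and the powers $x,x^2,x^3,\dots$ are pairwise distinct nonzero nonunits (an equality $x^i=x^j$ with $i<j$ could be cancelled to $x^{j-i}=1$, forcing $x$ to be a unit); then $\{x,x^2,x^3\}$ is a triangle, a contradiction. Next, exactly as in the proof of Theorem \ref{finite}, I would show ${\rm Jac}(R)=\{0\}$: if $0\neq a\in{\rm Jac}(R)$, then since $\delta(\mathtt{ZA}(R))>0$ the vertex $a$ is adjacent to some vertex $b$; writing $Ra+Rb=Rc$ by the B\'ezout hypothesis, $c$ is not a unit, for otherwise $ra+sb=1$ with $ra\in{\rm Jac}(R)$ would make $sb=1-ra$ a unit and hence $b$ a unit, and $c\neq0$ since $a\in Rc$; thus $c$ is a nonzero nonunit and ${\rm Ann}_R(Ra+Rb)={\rm Ann}_R(c)\neq\{0\}$ by the previous paragraph, contradicting the adjacency of $a$ and $b$.

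Now ${\rm Jac}(R)=\bigcap_{i=1}^{n}\mathfrak{m}_i=\{0\}$ with the $\mathfrak{m}_i$ pairwise comaximal, so the Chinese Remainder Theorem gives $R\simeq F_1\times\cdots\times F_n$ with each $F_i=R/\mathfrak{m}_i$ a field and $n=|{\rm Max}(R)|$. Since $\mathtt{ZA}$ is a ring-isomorphism invariant, it suffices to rule out $n=1$ and $n\geq3$. If $n=1$ then $R$ is a field, so $\mathtt{ZA}(R)$ has no vertices, contradicting $\delta(\mathtt{ZA}(R))>0$ (which already forces at least one edge). If $n\geq3$, a direct computation of componentwise annihilators shows that in $F_1\times\cdots\times F_n$ two vertices $x=(x_i)$ and $y=(y_i)$ are adjacent if and only if $\{i:x_i\neq0\}\cup\{i:y_i\neq0\}=\{1,\dots,n\}$; then the three distinct vertices $(0,1,1,\dots,1)$, $(1,0,1,\dots,1)$, $(1,1,0,1,\dots,1)$ have pairwise union of supports equal to $\{1,\dots,n\}$, so they form a triangle, contradicting bipartiteness. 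Hence $n=2$, i.e. $R\simeq F_1\times F_2$, which is $(3)$.

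The main obstacle is the proof of $(1)\Rightarrow(3)$, and within it the delicate points are the argument that ${\rm Jac}(R)=\{0\}$ (in particular handling the case where the principal ideal $Rc$ with $Ra+Rb=Rc$ is all of $R$, which is exactly where $a\in{\rm Jac}(R)$ is used) and the extraction of a triangle when $n\geq3$ (the support criterion for adjacency in a finite product of fields makes this transparent once it is set up). The rest is straightforward bookkeeping with annihilators.
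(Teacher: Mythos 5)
Your proposal is correct and follows essentially the same route as the paper: rule out zero annihilators via a clique of powers of $x$, show ${\rm Jac}(R)=\{0\}$ by the B\'ezout argument from Theorem \ref{finite}, apply the Chinese Remainder Theorem to get $R\simeq F_1\times\cdots\times F_n$, and eliminate $n\neq 2$ with the triangle $(0,1,\dots,1),(1,0,1,\dots,1),(1,1,0,1,\dots,1)$. Your write-up is in fact somewhat more careful than the paper's (e.g.\ justifying that $c$ with $Rc=Ra+Rb$ is a nonunit, and giving the explicit support criterion for adjacency in a product of fields), but the underlying argument is the same.
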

\begin{proof}
$(1)\Rightarrow(3)$
Suppose that $\mathtt{ZA}(R)$ is a bipartite graph with $\delta(\mathtt{ZA}(R))>0$.
If ${\rm Ann}_R(x)=\{0\}$ for some nonzero nonunit element $x$ of $R$, then
$\{x^n|n\in\mathbb{N}\}$ is an infinite clique that is a contradiction.
Then, for every nonzero nonunit element $x$ of $R$ we have ${\rm Ann}_R(x)\neq\{0\}$.
Similar to the proof of Theorem \ref{finite} we can show that 
$R=F_1 \times F_ 2\times\cdots\times F_n$, where 
$F_i$'s are fields and $n=|{\rm Max}(R)|$. Clearly $n\neq1$. 
If $n\geq3$, then
$\{(0,1,\dots,1),(1,0,1,\dots,1),(1,1,0,1,\dots,1)\}$
is a clique in $\mathtt{ZA}(R)$, a contradiction. So $R\simeq F_1\times F_2$.\\
$(3)\Rightarrow(2)$ Suppose that $R\simeq F_1\times F_2$ where $F_1$ and $F_2$ are two fields. 
Every vertex in $\mathsf{ZA}(R)$ is of the form $(u,0)$ or $(0,v)$ where $0\neq u\in F_1$
and $0\neq v\in F_2$. Also, two vertices 
$(u,0)$ and $(0,v)$ are adjacent. On the other hand, every two vertices $(u_1,0), (u_2,0)$  cannot be adjacent. \\
$(2)\Rightarrow(1)$ is clear.
\end{proof}

\begin{theorem}
Let $R$ be a ring and $n\geq2$ be a natural number. Then $$girth(\mathtt{ZA}(M_n(R)))=3.$$
\end{theorem}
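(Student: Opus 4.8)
The plan is to prove the statement by exhibiting an explicit triangle (a cycle of length $3$) in $\mathtt{ZA}(M_n(R))$. Since no simple graph has a cycle of length $1$ or $2$, the presence of a triangle shows that the girth is at most $3$ and at least $3$, hence equal to $3$. (First one should note in passing that $\mathtt{ZA}(M_n(R))$ genuinely has $\geq 3$ vertices: for $n\geq 2$ the ring $M_n(R)$ has many nonzero nonunits, e.g. the matrix units $E_{ij}$.) Throughout I write $I$ for the identity matrix, $E_{ij}$ for the standard matrix units, and $J$ for the all-ones $n\times n$ matrix; the argument will be insensitive to whether ${\rm Ann}$ is read as the left, right, or two-sided annihilator in the noncommutative ring $M_n(R)$, so I will carry it out for the right annihilator ${\rm Ann}(X)=\{Y\in M_n(R):XY=0\}$ and remark that the left-annihilator case follows by interchanging the roles of rows and columns, while the two-sided annihilator is contained in both and so is covered automatically.

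The three candidate vertices are $A=I-E_{11}$, $B=I-E_{22}$, and $C=J$. The first step is to check these are legitimate, pairwise distinct vertices: each is nonzero for $n\geq 2$; each is a nonunit since $\det$ vanishes (concretely $A E_{11}=0$, $B E_{22}=0$, and $J$ has equal rows); and they are distinct because they differ in the $(1,1)$, $(2,2)$, and $(1,2)$ entries respectively. The computational core is then to identify the three annihilators: multiplication by $A=I-E_{11}$ kills row $1$, so ${\rm Ann}(A)=\{Y:\text{rows }2,\dots,n\text{ of }Y\text{ are }0\}$; similarly ${\rm Ann}(B)=\{Y:\text{rows }1,3,4,\dots,n\text{ of }Y\text{ are }0\}$; and $JY$ has every row equal to the vector of column sums of $Y$, so ${\rm Ann}(J)=\{Y:\text{every column of }Y\text{ sums to }0\}$. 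Finally one intersects in pairs: ${\rm Ann}(A)\cap{\rm Ann}(B)=\{0\}$ because a matrix that can be nonzero only in row $1$ and only in row $2$ is zero; ${\rm Ann}(A)\cap{\rm Ann}(J)=\{0\}$ because if only row $1$ of $Y$ survives then the column sums are exactly the entries of that row, forcing it to vanish; and ${\rm Ann}(B)\cap{\rm Ann}(J)=\{0\}$ by the same reasoning. Hence $A,B,C$ are pairwise adjacent, i.e. they form a triangle, and the conclusion follows.

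I do not anticipate a real obstacle here; the one point that needs a little care is the choice of vertices. The naive guess $\{E_{11},E_{22},J\}$ does form a triangle when $n=2$, but it fails for $n\geq 3$, since there ${\rm Ann}(E_{11})\cap{\rm Ann}(E_{22})$ still contains every matrix supported on rows $3,\dots,n$. Passing instead to the complementary idempotents $I-E_{11}$ and $I-E_{22}$ repairs this and makes the construction work uniformly in $n$, which is why I would set the proof up that way. Everything else is a short, routine verification of the three annihilator intersections.
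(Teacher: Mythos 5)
Your proof is correct and takes essentially the same approach as the paper: both exhibit an explicit triangle of pairwise adjacent nonzero nonunit matrices (the paper uses $E_{11},E_{21},E_{12}+E_{22}$ for $n=2$ and $I-E_{11},\,I-E_{22},\,E_{11}+E_{22}$ for $n\geq 3$). Your choice of third vertex $J$ lets the same triangle work uniformly for all $n\geq 2$, avoiding the paper's case split, and your remarks on the left/right annihilator convention and the full verification of the intersections are details the paper leaves implicit.
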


\begin{proof}
For $n=2$, the following matrices are pairwise adjacent in  $\mathtt{ZA}({\rm M}_2(R))$:
$$\begin{pmatrix}
1 &0 \\
0 & 0
\end{pmatrix},
\begin{pmatrix}
 0&0\\
1 & 0
\end{pmatrix}
\mbox{ and }
\begin{pmatrix}
0 &1\\
0 &1
\end{pmatrix}.$$
For $n\geq3$, the following matrices are pairwise adjacent in  $\mathtt{ZA}({\rm M}_n(R))$:
$$\begin{pmatrix}
1 &0&0\dots 0 \\
0 & 1&0\dots 0\\
0 &0&0\dots 0\\
\vdots&\vdots&\ddots\\
0 &0&0\dots 0
\end{pmatrix},~~~~~~~~~~~~~
\begin{pmatrix}
1 &0&0&0\dots 0 \\
0 & 0&0&0\dots 0\\
0 & 0&1&0\dots 0\\
0 & 0&0&1\dots 0\\
\vdots&\vdots&\vdots&\ddots\\
0 &0&0&0\dots 1
\end{pmatrix}
$$
and
$$\begin{pmatrix}
0 &0&0&0\dots 0 \\
0 & 1&0&0\dots 0\\
0 & 0&1&0\dots 0\\
0 & 0&0&1\dots 0\\
\vdots&\vdots&\vdots&\ddots\\
0 &0&0&0\dots 1
\end{pmatrix}.$$
\end{proof}

\section{When is $\mathtt{ZA}(R)$ connected?}

A ring $R$ is called {\it semiprimitive} if ${\rm Jac}(R)=0$, \cite{lam1}.
A ring $R$ is semiprimitive if and only if it is a subdirect product of fields, \cite[p. 179]{lam2}.
\begin{theorem}\label{conn1}
Let $R$ be a semiprimitive ring. If at least one of the maximal ideals of $R$ is principal, then
$\mathtt{ZA}(R)$ is a connected graph with ${\rm diam}(\mathtt{ZA}(R))\leq4$.
\end{theorem}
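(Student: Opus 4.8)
The plan is to single out the generator $p$ of the principal maximal ideal $\mathfrak m=Rp$ as an almost–central vertex, by proving that every vertex of $\mathtt{ZA}(R)$ lies at distance at most $2$ from $p$; concatenating two such paths then yields ${\rm d}(x,y)\le 4$ for all vertices $x,y$ and in particular shows $\mathtt{ZA}(R)$ is connected. We may assume $R$ is not a field, for otherwise $\mathtt{ZA}(R)$ has no vertices and there is nothing to prove; since $\mathfrak m=Rp$ is then a nonzero proper ideal, $p$ is a nonzero nonunit, i.e. a genuine vertex. The easy half is a vertex $x\notin\mathfrak m$: as $\mathfrak m=Rp$ is maximal we get $Rx+Rp=R$, hence ${\rm Ann}_R(x)\cap{\rm Ann}_R(p)={\rm Ann}_R(Rx+Rp)=\{0\}$ and $x$ is adjacent to $p$ (they are distinct, since $p\in\mathfrak m$). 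Thus ${\rm d}(x,p)\le 1$ for every vertex outside $\mathfrak m$.

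The substance is the case of a vertex $x\in\mathfrak m$. Here I would invoke semiprimitivity in the form ${\rm Jac}(R)=\{0\}$: since $x\ne 0$, it lies outside some maximal ideal $\mathfrak n$, so $Rx+\mathfrak n=R$ and we may write $1=a+b$ with $a\in Rx$ and $b\in\mathfrak n$. Two things fall out of this single decomposition. First, $a\in Rx\subseteq\mathfrak m$, so $b=1-a\notin\mathfrak m$; hence $b$ is a nonzero nonunit (a vertex), distinct from both $x$ and $p$, and moreover $Rb+Rp=R$, so $b$ is adjacent to $p$. Second, if $c\in{\rm Ann}_R(x)\cap{\rm Ann}_R(b)$ then $ca=0$ (because $a\in Rx$ and $cx=0$) and $cb=0$, whence $c=c(a+b)=0$; therefore ${\rm Ann}_R(x)\cap{\rm Ann}_R(b)=\{0\}$ and $x$ is adjacent to $b$. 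This produces a path of length $2$ from $x$ to $p$, establishing the claim, and so ${\rm d}(x,y)\le{\rm d}(x,p)+{\rm d}(p,y)\le 4$ for all vertices $x,y$.

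I expect the one genuinely load-bearing move to be exactly the choice of $b$: a priori one worries that a vertex sitting inside $\mathfrak m$ might be isolated (two vertices both inside $\mathfrak m$ can indeed fail to be adjacent, since their annihilators both contain ${\rm Ann}_R(p)$), and the device of writing $1=a+b$ with $a\in Rx$—available precisely because $x$ avoids some maximal ideal—simultaneously forces $b$ out of $\mathfrak m$ and kills the intersection of annihilators. The rest is bookkeeping I would still verify: that $b\ne x$ and $b\ne p$, that the hub $p$ is itself a vertex (which needs $\mathfrak m\ne\{0\}$), and the degenerate cases where $\mathtt{ZA}(R)$ has few or no vertices. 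It is worth recording where the two hypotheses enter: principality of some maximal ideal is used only to supply the single hub $p$ adjacent to every vertex outside $\mathfrak m$, while semiprimitivity is used only to guarantee that every nonzero element of $R$ lies outside some maximal ideal.
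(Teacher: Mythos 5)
Your proof is correct and follows essentially the same route as the paper's: the generator of the principal maximal ideal serves as a near-central vertex, elements outside $\mathfrak{m}$ are adjacent to it via comaximality, and elements inside $\mathfrak{m}$ reach it in two steps through an element $b\notin\mathfrak{m}$ produced from the decomposition $1=a+b$ supplied by semiprimitivity. Your organization (distance at most $2$ to the hub, then the triangle inequality) is a slightly cleaner packaging of the paper's three-case analysis, and you are more careful about the degenerate checks ($b\neq x,p$; $p$ is a genuine vertex), but the mathematical content is identical.
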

\begin{proof}
Suppose that $\mathfrak{m}$ is a maximal ideal of $R$ where $\mathfrak{m}=Rt$
for some $t\in R$. Let $x,y$ be two different nonzero nonunit elements of $R$. Consider the following cases:\\
{\bf Case 1.} Let $x,y\notin\mathfrak{m}$. Then $Rx+\mathfrak{m}=R$ and $Ry+\mathfrak{m}=R$.
Hence $x, y$ are adjacent to $t$. So $d_{\mathtt{ZA}(R)}(x,y)\leq2$.\\
{\bf Case 2.} Let $x\in\mathfrak{m}$ and $y\notin\mathfrak{m}$. 
Notice that $y$ is adjacent to $t$.
Since ${\rm Jac}(R)=\{0\}$,
there exists a maximal ideal $\mathfrak{m}^{\prime}$ different from $\mathfrak{m}$
such that $x\notin\mathfrak{m}^{\prime}$. 
So $Rx+\mathfrak{m}^{\prime}=R$, and thus there exist elements $r\in R$ and $z\in\mathfrak{m}^{\prime}$
such that $rx+z=1$. 
Therefore ${\rm Ann}_R(x)\cap {\rm Ann}_R(z)=\{0\}$.
So $x$ is adjacent to $z$.
Clearly $z\notin\mathfrak{m}$. Then $z$ is adjacent to $t$. Hence $d_{\mathtt{ZA}(R)}(x,y)\leq3$.\\
{\bf Case 3.} Let $x,y\in\mathfrak{m}$. A manner similar to Case 2 shows that
$d_{\mathtt{ZA}(R)}(x,t)\leq2$ and $d_{\mathtt{ZA}(R)}(y,t)\leq2$. Therefore
$d_{\mathtt{ZA}(R)}(x,y)\leq4$.\\
Consequently $\mathtt{ZA}(R)$ is a connected graph with ${\rm diam}(\mathtt{ZA}(R))\leq4$.
\end{proof}

\begin{theorem}\label{conn2}
Let $R$ be a B\'{e}zout ring. If $\mathtt{ZA}(R)$ is connected, then one of the following conditions holds:
\begin{enumerate}
\item There exists a nonzero nonunit element $x$ of $R$ such that $Ann_R(x)=\{0\}$,
\item $Jac(R)=\{0\}$,
\item $Jac(R)=\{0,x\}$ where $x$ is the only nonzero nonunit element of $R$.
\end{enumerate}
\end{theorem}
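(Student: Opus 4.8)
The plan is to establish the dichotomy by contradiction: assume all three alternatives fail and derive a contradiction. So suppose that every nonzero nonunit element of $R$ has a nonzero annihilator (so (1) fails), that ${\rm Jac}(R)\neq\{0\}$ (so (2) fails), and that we are not in situation (3); the goal is to contradict this last assumption using the first two together with the connectedness of $\mathtt{ZA}(R)$.

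First I would fix a nonzero element $a\in{\rm Jac}(R)$; since ${\rm Jac}(R)$ lies in every maximal ideal, $a$ is a nonzero nonunit, hence a vertex of $\mathtt{ZA}(R)$. The key step is to show that $a$ is an isolated vertex. If $b$ were any neighbour of $a$, then ${\rm Ann}_R(a)\cap{\rm Ann}_R(b)=\{0\}$, and since $R$ is a B\'{e}zout ring we may write $Ra+Rb=Rc$ for some $c\in R$. Here $c\neq 0$ because $0\neq b\in Rc$, and $c$ is a nonunit because any maximal ideal $\mathfrak{m}$ containing $b$ also contains $a\in{\rm Jac}(R)$, hence contains $Rc$. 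Thus $c$ is a nonzero nonunit with ${\rm Ann}_R(c)={\rm Ann}_R(Rc)={\rm Ann}_R(Ra+Rb)={\rm Ann}_R(a)\cap{\rm Ann}_R(b)=\{0\}$, contradicting the failure of (1). This annihilator bookkeeping is essentially the one already carried out in the proof of Theorem \ref{finite}. So $a$ has no neighbour.

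Having shown that $a$ is isolated, I would use connectedness of $\mathtt{ZA}(R)$ to conclude that it has no other vertex at all (a connected graph on at least two vertices has no isolated vertex), so $a$ is the \emph{unique} nonzero nonunit element of $R$. Finally I would observe that a ring with exactly one nonzero nonunit must be local: two distinct maximal ideals $\mathfrak{m}_1\neq\mathfrak{m}_2$ would be comaximal, yielding $p\in\mathfrak{m}_1$ and $q\in\mathfrak{m}_2$ with $p+q=1$, and these are two distinct nonzero nonunits. Hence $R$ is local, its maximal ideal coincides with its set of nonunits, namely $\{0,a\}$, and therefore ${\rm Jac}(R)=\{0,a\}$ with $a$ the only nonzero nonunit — precisely situation (3), the contradiction sought. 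I expect the only genuinely fussy point to be this degenerate endgame, i.e.\ verifying that ``exactly one nonzero nonunit'' forces $R$ to be local and that the maximal ideal is then exactly $\{0,a\}={\rm Jac}(R)$; the B\'{e}zout-plus-annihilator core of the argument is short and self-contained.
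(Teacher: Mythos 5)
Your proposal is correct and follows essentially the same route as the paper's proof: assume (1) and (2) fail, take a nonzero $a\in{\rm Jac}(R)$, use the B\'{e}zout hypothesis to write $Ra+Rb=Rc$ with $c$ a nonzero nonunit and conclude $a$ has no neighbour, then invoke connectedness to force $|{\rm V}(\mathtt{ZA}(R))|=1$ and hence ${\rm Jac}(R)=\{0,a\}$. Your extra care in the endgame (checking that $c$ is a nonunit via $a\in\mathfrak{m}$, and that a ring with exactly one nonzero nonunit is local with maximal ideal $\{0,a\}$) fills in details the paper leaves implicit.
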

\begin{proof}
Assume that for every nonzero nonunit element $x$ of $R$, ${\rm Ann}_R(x)\neq\{0\}$ and also ${\rm Jac}(R)\neq\{0\}$.
Let $x$ be a nonzero element in ${\rm Jac}(R)$. Suppose that $\mathtt{ZA}(R)$ has a vertex $y$ different from $x$. Thus
 $Rx+Ry=Rz$ for some $z\in R$, because $R$ is a B\'{e}zout ring. Notice that $y\in \mathfrak{m}$ 
for some maximal ideal $\mathfrak{m}$ of $R$. Hence $z$ is nonzero nonunit and so by assumption ${\rm Ann}_R(z)\neq\{0\}$,
which shows that $x$ and $y$ are not adjacent. This contradiction implies that  $|V(\mathtt{ZA}(R))|=1$,
and so ${\rm Jac}(R)=\{0,x\}$.
\end{proof}

As a direct consequence of Theorem \ref{conn1} and Theorem \ref{conn2} we have the following result.
\begin{corollary}
Let $R$ be a B\'{e}zout ring 
 such that at least one of the maximal ideals of $R$ is principal. 
Then $\mathtt{ZA}(R)$ is connected if and only if one of the following conditions holds:
\begin{enumerate}
\item There exists a nonzero nonunit element $x$ of $R$ such that $Ann_R(x)=\{0\}$,
\item $Jac(R)=\{0\}$,
\item $Jac(R)=\{0,x\}$ where $x$ is the only nonzero nonunit element of $R$.
\end{enumerate}
\end{corollary}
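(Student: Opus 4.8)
The plan is to read off both implications directly from the two theorems just established, so the argument is essentially bookkeeping. For the ``only if'' direction, suppose $\mathtt{ZA}(R)$ is connected. Since $R$ is a B\'ezout ring, Theorem~\ref{conn2} applies verbatim and yields that one of the three listed conditions holds. Nothing further is needed here; in particular, the hypothesis that some maximal ideal of $R$ is principal is not used for this direction.

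For the ``if'' direction I would treat the three conditions in turn. First, assume (1) holds, say $x$ is a nonzero nonunit element with $\mathrm{Ann}_R(x)=\{0\}$. Then $x\in {\rm V}(\mathtt{ZA}(R))$, and for every other vertex $y$ we have $\mathrm{Ann}_R(x)\cap\mathrm{Ann}_R(y)=\{0\}$, so $x$ is adjacent to $y$. Thus $x$ is a vertex adjacent to all others and $\mathtt{ZA}(R)$ is connected, indeed of diameter at most $2$. Next, if (2) holds, then $\mathrm{Jac}(R)=\{0\}$ says precisely that $R$ is semiprimitive; combining this with the standing hypothesis that $R$ has a principal maximal ideal, Theorem~\ref{conn1} applies and gives that $\mathtt{ZA}(R)$ is connected with diameter at most $4$. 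Finally, if (3) holds, then the set of nonzero nonunit elements of $R$ is the singleton $\{x\}$, so $\mathtt{ZA}(R)$ has exactly one vertex and is trivially connected. Since the three conditions are mutually exhaustive of what can occur, this establishes connectedness whenever any of them holds.

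There is no genuine obstacle in this corollary; the only points requiring care are the verifications that Theorem~\ref{conn2} needs only the B\'ezout hypothesis (so it is available for the ``only if'' part, where the principal maximal ideal is not assumed to be used), that the principal-maximal-ideal hypothesis is exactly what feeds Theorem~\ref{conn1} in case (2), and that cases (1) and (3) are elementary and independent of that hypothesis. Assembling the ``only if'' direction from Theorem~\ref{conn2} with the three reverse sub-cases above completes the proof.
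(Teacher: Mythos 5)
Your proof is correct and matches the paper's intent exactly: the paper states this corollary as a direct consequence of Theorems \ref{conn1} and \ref{conn2} without writing out the details, and your case analysis for the ``if'' direction (a vertex with zero annihilator is adjacent to everything; the semiprimitive case feeds Theorem \ref{conn1}; the singleton case is trivial) is precisely the intended bookkeeping.
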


\begin{theorem}
Let $R=F_1\times F_2\times\cdots\times F_n$ where $F_i$'s are fields. Then
$\mathtt{ZA}(R)$ is a connected graph with
$$
{\rm diam}(\mathtt{ZA}(R))=
\begin{cases}
1 &\mbox{ if } n=2 \mbox{ and } |F_1|=|F_2|=2 \\ 
2 &\mbox{ if } n=2 \mbox{ and either } |F_1|>2 \mbox{ or } |F_2|>2\\
3 &\mbox{ if } n \geq3.
\end{cases}
$$
\end{theorem}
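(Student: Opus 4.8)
The plan is to reduce adjacency in $\mathtt{ZA}(R)$ to a purely combinatorial condition on ``supports''. For $x=(a_1,\dots,a_n)\in R$ put $\sigma(x)=\{\,i:a_i\neq 0\,\}$; then $x$ is a nonzero nonunit precisely when $\emptyset\neq\sigma(x)\subsetneq\{1,\dots,n\}$, and ${\rm Ann}_R(x)$ is the ideal whose $i$-th component equals $F_i$ if $i\notin\sigma(x)$ and $0$ otherwise. Consequently ${\rm Ann}_R(x)\cap{\rm Ann}_R(y)=\{0\}$ if and only if $\sigma(x)\cup\sigma(y)=\{1,\dots,n\}$. I would record this observation first; all the distance computations are then graph-theoretic consequences of it.

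For $n=2$ I would simply invoke the previous theorem (the implication that $R\simeq F_1\times F_2$ makes $\mathtt{ZA}(R)$ complete bipartite): here $\mathtt{ZA}(R)$ is the complete bipartite graph with parts $\{(u,0):0\neq u\in F_1\}$ and $\{(0,v):0\neq v\in F_2\}$, of sizes $|F_1|-1$ and $|F_2|-1$, both positive. If both sizes are $1$, i.e.\ $|F_1|=|F_2|=2$, this is a single edge and ${\rm diam}=1$; otherwise some part has at least two vertices, those two vertices are non-adjacent but joined by a path of length $2$ through the other part, so ${\rm diam}=2$.

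For $n\geq 3$ I would establish ${\rm diam}(\mathtt{ZA}(R))\leq 3$ and then exhibit a pair at distance $3$. Let $x,y$ be distinct vertices, $S=\sigma(x)$, $T=\sigma(y)$. If $S\cup T=\{1,\dots,n\}$ they are adjacent. If $S\cap T\neq\emptyset$, then since $S$ is proper the set $\{1,\dots,n\}\setminus(S\cap T)$ is proper and nonempty, so any vertex $w$ with $\sigma(w)=\{1,\dots,n\}\setminus(S\cap T)$ is adjacent to both $x$ and $y$, giving distance $\leq 2$. If $S\cap T=\emptyset$ but $S\cup T\neq\{1,\dots,n\}$, choose $z_1,z_2$ (with all nonzero entries equal to $1$, say) so that $\sigma(z_1)=\{1,\dots,n\}\setminus S$ and $\sigma(z_2)=\{1,\dots,n\}\setminus T$; then $x\sim z_1$, $z_2\sim y$, and $\sigma(z_1)\cup\sigma(z_2)=\{1,\dots,n\}\setminus(S\cap T)=\{1,\dots,n\}$, so $z_1\sim z_2$, producing a path $x-z_1-z_2-y$ of length $3$. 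The point requiring care — and the main (modest) obstacle — is checking that $z_1,z_2$ are honest vertices distinct from $x,y$ and from each other: $\sigma(z_1)$ and $S$ are disjoint and nonempty so $z_1\neq x$; if $z_1=y$ then $T=\{1,\dots,n\}\setminus S$, forcing $S\cup T=\{1,\dots,n\}$, contrary to non-adjacency; and $\sigma(z_1)=\sigma(z_2)$ would force $S=T=\emptyset$. All of this leans on the non-adjacency hypothesis, which is exactly what makes it work.

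Finally, to pin down ${\rm diam}=3$ for $n\geq3$, I would take $x=(1,0,\dots,0)$ and $y=(0,1,0,\dots,0)$, so $S=\{1\}$, $T=\{2\}$. They are non-adjacent because $S\cup T=\{1,2\}\subsetneq\{1,\dots,n\}$, and a common neighbour $w$ would need $\sigma(w)\supseteq(\{1,\dots,n\}\setminus\{1\})\cup(\{1,\dots,n\}\setminus\{2\})=\{1,\dots,n\}$, impossible for a vertex; hence $d_{\mathtt{ZA}(R)}(x,y)\geq 3$, and with the upper bound above $d_{\mathtt{ZA}(R)}(x,y)=3$. Connectedness in every case is immediate from the finite diameter bounds just obtained, which completes the proof.
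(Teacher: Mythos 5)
Your proof is correct and follows essentially the same route as the paper's: reduce adjacency to the support condition $\sigma(x)\cup\sigma(y)=\{1,\dots,n\}$, handle $n=2$ via the complete bipartite structure, and for $n\geq3$ route paths of length at most $3$ through vertices supported on complements of supports. In fact your argument is slightly more complete at one point: for $n\geq3$ the paper merely exhibits a path of length $3$ between $(0,1,0,\dots,0)$ and $(1,0,\dots,0)$ without ruling out a shorter one, whereas you verify that these two vertices are non-adjacent and have no common neighbour, which is what actually forces ${\rm diam}(\mathtt{ZA}(R))=3$.
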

\begin{proof}
Let $n=2$. In this case every vertex in $\mathsf{ZA}(R)$ is of the form $(u,0)$ or $(0,v)$ where $u\neq0$
and $v\neq0$. Furthermore, two vertices 
$(u,0)$ and $(0,v)$ are adjacent. \\
In the case when $n=2$ and $|F_1|=|F_2|=2$, we have $R\simeq\mathbb{Z}_2\times\mathbb{Z}_2$. So $\mathsf{ZA}(R)\simeq K_2$.\\
Let $n=2$ and $|F_1|>2$. In this case, 
every two different vertices $(u_1,0)$ and $ (u_2,0)$  cannot be adjacent. 
On the other hand $(u_1,0)$ and $ (u_2,0)$ are adjacent to $(0,1)$.
So ${\rm d}_{\mathsf{ZA}(R)}((u_1,0), (u_2,0))=2$.
Hence ${\rm diam}(\mathtt{ZA}(R))=2$.\\
Now, let $n\geq3$. Assume that $u=(u_1,u_2,\dots,u_n)$ and $v=(v_1,v_2,\dots,v_n)$ are two different vertices.
There exist two indexes $i,j$ such that $u_i\neq0$ and $v_j\neq0$. 
So $u=(u_1,u_2,\dots,u_n)$ is adjacent to $(1,\dots,1,\overbrace{0}^{i-\mbox{th}},1,\dots,1)$.
Also $v=(v_1,v_2,\dots,v_n)$ is adjacent to $(1,\dots,1,\overbrace{0}^{j-\mbox{th}},1,\dots,1)$.
Furthermore, if $i\neq j$, then the vertex $(1,\dots,1,\overbrace{0}^{i-\mbox{th}},1,\dots,1)$ is adjacent to
$(1,\dots,1,\overbrace{0}^{j-\mbox{th}},1,\dots,1)$.
Thus $\mathtt{ZA}(R)$ is connected and $d_{\mathtt{ZA}(R)}(u,v)\leq3$.
In special case, we have the following  path
$$(0,1,0,\dots,0)-(1,0,1,\dots,1)-(0,1,\dots,1)-(1,0,\dots,0).$$ Consequently ${\rm diam}(\mathtt{ZA}(R))=3$.
\end{proof}

\section{When is $\mathtt{ZA}(R)$ star?}

\begin{lemma}\label{star1}
Let $R$ be a ring. If $\mathtt{ZA}(R)$ is a star, then $|{Max}(R)|\leq 2$. 
\end{lemma}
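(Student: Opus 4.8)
The plan is to argue by contradiction, assuming $R$ has three distinct maximal ideals $\mathfrak{m}_1,\mathfrak{m}_2,\mathfrak{m}_3$. The main device is that comaximality manufactures edges: given distinct maximal ideals $\mathfrak{n},\mathfrak{n}'$, from $\mathfrak{n}+\mathfrak{n}'=R$ pick $u\in\mathfrak{n}$ with $1-u\in\mathfrak{n}'$. First I would verify that $u$ and $1-u$ are genuine vertices of $\mathtt{ZA}(R)$: neither is a unit (each lies in a maximal ideal), neither is $0$ (otherwise $1\in\mathfrak{n}'$ or $1\in\mathfrak{n}$), and $u\neq 1-u$ (else $2u=1$ makes $u$ a unit); they are adjacent since $Ru+R(1-u)=R$ forces ${\rm Ann}_R\bigl(Ru+R(1-u)\bigr)=\{0\}$. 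Moreover, reducing modulo each ideal gives $u\in\mathfrak{n}\setminus\mathfrak{n}'$ and $1-u\in\mathfrak{n}'\setminus\mathfrak{n}$, because $u\equiv 1\pmod{\mathfrak{n}'}$ and $1-u\equiv 1\pmod{\mathfrak{n}}$.

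Next I would invoke the star hypothesis. Applying the previous step to $\mathfrak{m}_1,\mathfrak{m}_2$ already exhibits an edge, so $\mathtt{ZA}(R)$ is a star with at least one edge and therefore has a vertex $c$ lying on every one of its edges. For each pair $i\neq j$, the edge produced from $\mathfrak{m}_i,\mathfrak{m}_j$ must pass through $c$, hence $c\in(\mathfrak{m}_i\setminus\mathfrak{m}_j)\cup(\mathfrak{m}_j\setminus\mathfrak{m}_i)$; in other words $c$ belongs to exactly one of $\mathfrak{m}_i,\mathfrak{m}_j$. Writing $\varepsilon_k=1$ if $c\in\mathfrak{m}_k$ and $\varepsilon_k=0$ otherwise, the three conditions coming from the pairs $\{1,2\},\{1,3\},\{2,3\}$ read $\varepsilon_1+\varepsilon_2=\varepsilon_1+\varepsilon_3=\varepsilon_2+\varepsilon_3=1$, whence $2(\varepsilon_1+\varepsilon_2+\varepsilon_3)=3$, which is absurd. (Concretely: if $c\in\mathfrak{m}_1$ then $c\notin\mathfrak{m}_2$ and $c\notin\mathfrak{m}_3$, contradicting the pair $\{2,3\}$; if $c\notin\mathfrak{m}_1$ then $c\in\mathfrak{m}_2$ and $c\in\mathfrak{m}_3$, again contradicting the pair $\{2,3\}$.) Thus $R$ cannot have three distinct maximal ideals, i.e. $|{\rm Max}(R)|\leq 2$.

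It remains to note the degenerate possibility that $\mathtt{ZA}(R)$ has no edges at all; but then, by the same comaximality argument, $R$ admits no two distinct maximal ideals and the bound is trivial. In particular $K_1$ and $K_2$ need no separate treatment, since the proof only uses one vertex lying on all edges, and for $K_2$ either endpoint serves. I do not anticipate a genuine obstacle: the only points requiring care are the (routine) check that $u$ and $1-u$ are distinct vertices sitting in the asserted set differences, and the clean reading-off of the incidence conditions for $c$; after that the contradiction is immediate.
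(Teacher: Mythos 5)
Your proof is correct, and while it shares its opening move with the paper's, the way you reach the contradiction is genuinely different. Both arguments begin by observing that comaximality manufactures edges: for distinct maximal ideals $\mathfrak{m}_i,\mathfrak{m}_j$ one gets adjacent vertices $u\in\mathfrak{m}_i\setminus\mathfrak{m}_j$ and $1-u\in\mathfrak{m}_j\setminus\mathfrak{m}_i$, and your verification that these are distinct, nonzero, nonunit and adjacent is exactly the routine check the paper leaves implicit. From there the paper uses prime avoidance to select $a\in\mathfrak{m}_1\setminus(\mathfrak{m}_2\cup\mathfrak{m}_3)$, $b\in\mathfrak{m}_2\setminus(\mathfrak{m}_1\cup\mathfrak{m}_3)$, $c\in\mathfrak{m}_3\setminus(\mathfrak{m}_1\cup\mathfrak{m}_2)$ and argues that the edges these elements generate force either a triangle or two disjoint edges, neither of which a star can contain. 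You instead exploit the defining property of a star head-on: the centre must be an endpoint of each of the three manufactured edges, hence lies in exactly one ideal of each pair $\{\mathfrak{m}_i,\mathfrak{m}_j\}$, and the three conditions $\varepsilon_i+\varepsilon_j=1$ are inconsistent by parity. Your endgame is arguably cleaner: it needs no prime avoidance, and it sidesteps the delicate point that the paper's $a$ and $b$ need not actually be adjacent (they could both lie in a fourth maximal ideal), which is precisely why the paper must hedge with the ``two disjoint edges'' alternative. Your treatment of the edgeless case and of $K_2$ is also sound, so no gaps remain.
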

\begin{proof}
Suppose that $\mathtt{ZA}(R)$ is a star.
If $\mathfrak{m}$ and $\mathfrak{m}^{\prime}$ is two different maximal ideals of $R$, then for every 
$x\in\mathfrak{m}\backslash\mathfrak{m}^{\prime}$ we have
$Rx+\mathfrak{m}^{\prime}=R$. Hence there exist elements $r\in R$ and $y\in\mathfrak{m}^{\prime}\backslash\mathfrak{m}$ 
such that $rs+y=1$.
Therefore ${\rm Ann}_R(x)\cap {\rm Ann}_R(y)=\{0\}$. So $x$ and $y$ are adjacent.
Let $\mathfrak{m}_1, \mathfrak{m}_2$ and $\mathfrak{m}_3$ be three different maximal ideals of $R$.
Then there are elements $a\in\mathfrak{m}_1\backslash(\mathfrak{m}_2\cup
\mathfrak{m}_3)$,
$b\in\mathfrak{m}_2\backslash(\mathfrak{m}_1\cup\mathfrak{m}_3)$ and $c\in\mathfrak{m}_3\backslash (\mathfrak{m}_1\cup\mathfrak{m}_2)$.
Then either $a,b,c$ are pairwise adjacent or there exist at least two disjoint edges in $\mathtt{ZA}(R)$, which is a contradiction.
Consequently  $|{\rm Max}(R)|\leq 2$. 
\end{proof}

\begin{theorem}\label{star}
Let $R$ be a B\'{e}zout ring that is not a field. Then $\mathtt{ZA}(R)$ is a star if and only if one of the following conditions holds:
\begin{enumerate}
\item $(R,\mathfrak{m})$ when $\mathfrak{m}=\{0,x\}$ in which $x$ is a nonzero element of $R$ with $x^2=0$,
\item $R\simeq\mathbb{Z}_2\times F$ where $F$ is a field. 
\end{enumerate}
\end{theorem}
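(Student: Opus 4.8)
The plan is to prove both implications, handling the (routine) sufficiency quickly and then the necessity through a case split on $|{\rm Max}(R)|$, which Lemma~\ref{star1} limits to $1$ or $2$ (it is at least $1$, and $R$ is not a field). For sufficiency: if (1) holds, $x$ is the only nonzero nonunit of $R$, so $\mathtt{ZA}(R)$ is the one-vertex graph, which is trivially a star; if (2) holds, $R\simeq\mathbb{Z}_2\times F$ and its nonzero nonunits are $(1,0)$ and the $(0,v)$ with $0\neq v\in F$, where (as $v$ is then a unit of $F$) ${\rm Ann}_R((1,0))=\{0\}\times F$ and ${\rm Ann}_R((0,v))=\mathbb{Z}_2\times\{0\}$, so $(1,0)$ is adjacent to every $(0,v)$ while no two $(0,v_1),(0,v_2)$ are adjacent, making $\mathtt{ZA}(R)$ a star centered at $(1,0)$. (Alternatively this follows from the earlier identification of $\mathtt{ZA}(F_1\times F_2)$ with the complete bipartite graph $K_{|F_1|-1,\,|F_2|-1}$, which for $F_1=\mathbb{Z}_2$ is the star $K_{1,\,|F|-1}$.)

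For necessity, assume $\mathtt{ZA}(R)$ is a star. I would first record that \emph{every nonzero nonunit $y$ of $R$ satisfies ${\rm Ann}_R(y)\neq\{0\}$}: otherwise ${\rm Ann}_R(y^n)=\{0\}$ for all $n$, the powers $y,y^2,y^3$ are pairwise distinct (if $y^i=y^j$ with $i<j$, then $1-y^{j-i}\in{\rm Ann}_R(y^i)=\{0\}$ makes $y$ a unit) and pairwise adjacent, so $\mathtt{ZA}(R)$ contains a triangle --- impossible for a star. If $|{\rm Max}(R)|=1$, then $R$ is a local Bézout ring in which every nonunit has nonzero annihilator ($0$ trivially, nonzero nonunits by the above), so Theorem~\ref{first} gives that $\mathtt{ZA}(R)$ is empty; an empty star has at most one vertex, and since $R$ is not a field it has exactly one, i.e. $\mathfrak{m}=\{0,x\}$ with $0\neq x$. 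Then $x^2\in\mathfrak{m}$, and $x^2=x$ would force $x=0$, so $x^2=0$, which is case (1).

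Now suppose $|{\rm Max}(R)|=2$, say ${\rm Max}(R)=\{\mathfrak{m}_1,\mathfrak{m}_2\}$. The crux is to show ${\rm Jac}(R)=\mathfrak{m}_1\cap\mathfrak{m}_2=\{0\}$. Given $0\neq a\in{\rm Jac}(R)$ and any other vertex $b$, we have $b\in\mathfrak{m}_i$ for some $i$, hence $Ra+Rb\subseteq\mathfrak{m}_i$; writing $Ra+Rb=Rc$ (Bézout), $c$ is a nonzero nonunit, so ${\rm Ann}_R(Ra+Rb)={\rm Ann}_R(c)\neq\{0\}$ and $a,b$ are not adjacent. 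Thus $a$ is isolated; but $\mathfrak{m}_1\setminus\mathfrak{m}_2$ and $\mathfrak{m}_2\setminus\mathfrak{m}_1$ each contain a vertex, so $\mathtt{ZA}(R)$ has at least two vertices, and a star on at least two vertices has no isolated vertex --- a contradiction. Hence ${\rm Jac}(R)=\{0\}$, the Chinese Remainder Theorem yields $R\simeq F_1\times F_2$ with $F_1,F_2$ fields, and then $\mathtt{ZA}(R)=K_{|F_1|-1,\,|F_2|-1}$ is a star precisely when one side is a single vertex, i.e. (after reordering) $F_1\cong\mathbb{Z}_2$ --- which is case (2).

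The step I expect to do the real work is the Bézout/isolated-vertex argument ruling out ${\rm Jac}(R)\neq\{0\}$ in the two-maximal-ideal case; the rest is assembled from Lemma~\ref{star1}, Theorem~\ref{first}, and the already-established shape of $\mathtt{ZA}(F_1\times F_2)$. I would also be careful about the degenerate conventions in play --- a single vertex counting as a star, and an empty star having at most one vertex --- on which case (1) of the statement depends.
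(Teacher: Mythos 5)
Your proof is correct and follows essentially the same route as the paper: Lemma~\ref{star1} to bound $|{\rm Max}(R)|\leq 2$, the infinite-clique argument to force nonzero annihilators, Theorem~\ref{first} in the local case, and the Bézout argument killing ${\rm Jac}(R)$ followed by CRT in the two-maximal-ideal case. The only (harmless) cosmetic differences are that you deduce $x^2=0$ from $x(x-1)=0$ rather than Nakayama's Lemma, and you phrase the ${\rm Jac}(R)=\{0\}$ step via an isolated vertex rather than the paper's explicit $x+y=1$ edge.
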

\begin{proof}
$(\Rightarrow)$ Suppose that $\mathtt{ZA}(R)$ is a star.
Hence $|{\rm Max}(R)|\leq 2$, by Lemma \ref{star1}. 
Notice that if ${\rm Ann}_R(t)=\{0\}$ for some element $t$ of a maximal ideal $\mathfrak{m}$, then
$\{t^n|n\in\mathbb{N}\}$ is an infinite clique that is impossible.
Consider the following cases:\\
{\bf Case 1.} ${\rm Max}(R)=\{\mathfrak{m}\}$. 
Let $x$ be a nonzero element in $\mathfrak{m}$.
Then by Theorem \ref{first}, $\mathtt{ZA}(R)$ is empty and so
$\mathfrak{m}=\{0,x\}$. On the other hand, by Nakayama's Lemma we have that $x^2=0$.\\
{\bf Case 2.} ${\rm Max}(R)=\{\mathfrak{m}_1,\mathfrak{m}_2\}$. Since $\mathfrak{m}_1+\mathfrak{m}_2=R$,
there exist $x\in\mathfrak{m}_1$ and $y\in\mathfrak{m}_2$ such that $x+y=1$. Hence $x$ and $y$ are adjacent. Now, if there exists 
$0\neq z\in\mathfrak{m}_1\cap\mathfrak{m}_2$, then 
$z$ is not adjacent to $x$ and $y$, because $R$ is 
a B\'{e}zout ring and ${\rm Ann}_R(t)=\{0\}$ for every nonzero nonunit element $t$ of $R$. 
This contradiction shows that $\mathfrak{m}_1\cap\mathfrak{m}_2=\{0\}$.
Hence by Chinese Remainder Theorem we deduce that $R\simeq R/\mathfrak{m}_1\oplus R/\mathfrak{m}_2$.
If there exist nozero elements $a_1,a_2\in R/\mathfrak{m}_1$ and $b_1,b_2\in R/\mathfrak{m}_2$, then
we have the following path $$(a_1,0)-(0,b_1)-(a_2,0)-(0,b_2),$$ a contradiction. Hence we can assume that $R/\mathfrak{m}_1=\mathbb{Z}_2$.\\
$(\Leftarrow)$
If (1) holds, the clearly $\mathtt{ZA}(R)$ is a star. Assume that (2) holds.
Notice that $(1,0)$ is adjacent to all vertices $(0,u)$ where $u$ is a nonzero element of $F$.
Also, for every two different elements $u_1,u_2\in F$, $(0,u_1)$ and $(0,u_2)$ are not adjacent.
Consequently $\mathtt{ZA}(R)$ is a star.
\end{proof}

\section{When is $\mathtt{ZA}(R)$ complete?}

\begin{proposition}\label{com}
Let $R$ be a ring. If $\mathtt{ZA}(R)$ is a complete graph, then $\mathcal{A}_R$ is a complete graph.
\end{proposition}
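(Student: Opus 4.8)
The plan is to verify the completeness condition for $\mathcal{A}_R$ directly: given two distinct nonzero proper ideals $I,J$ of $R$, I want to show ${\rm Ann}_R(I)\cap{\rm Ann}_R(J)=\{0\}$. I would argue by contradiction, assuming that some nonzero $a$ lies in ${\rm Ann}_R(I)\cap{\rm Ann}_R(J)$, and then descend from the level of ideals to the level of elements so that the hypothesis on $\mathtt{ZA}(R)$ can be applied.

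Concretely, since $I\neq J$ I may assume $I\not\subseteq J$; as $0\in J$, there is a nonzero element $x\in I\setminus J$, and $x$ is automatically a nonunit because it lies in the proper ideal $I$, so $x$ is a vertex of $\mathtt{ZA}(R)$. Choosing any nonzero $y\in J$, the element $y$ is a nonzero nonunit, hence also a vertex of $\mathtt{ZA}(R)$, and $x\neq y$ since $x\notin J$ while $y\in J$. Because $\mathtt{ZA}(R)$ is a complete graph, $x$ and $y$ are adjacent, so ${\rm Ann}_R(x)\cap{\rm Ann}_R(y)=\{0\}$. On the other hand $ax=0$ (as $a\in{\rm Ann}_R(I)$ and $x\in I$) and $ay=0$ (as $a\in{\rm Ann}_R(J)$ and $y\in J$), so $a\in{\rm Ann}_R(x)\cap{\rm Ann}_R(y)=\{0\}$, contradicting $a\neq 0$. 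This forces ${\rm Ann}_R(I)\cap{\rm Ann}_R(J)=\{0\}$, hence every pair of distinct vertices of $\mathcal{A}_R$ is adjacent and $\mathcal{A}_R$ is complete; the degenerate case in which $\mathcal{A}_R$ has at most one vertex (for instance when $R$ is a field) makes the assertion vacuously true.

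I do not expect a genuine obstacle here. The only point needing a little care is the bookkeeping required to extract two \emph{distinct} vertices of $\mathtt{ZA}(R)$ from the ideals $I$ and $J$: distinctness is precisely why one picks $x$ from $I\setminus J$ rather than from $I$ arbitrarily, and the nonunit requirement comes for free since every element of a proper ideal is a nonunit. Everything else is a routine unwinding of the definitions of ${\rm Ann}_R(I)$ and of adjacency in the two graphs.
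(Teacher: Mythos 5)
Your proof is correct and follows essentially the same route as the paper: both arguments pick distinct nonzero nonunit elements $x\in I$, $y\in J$ and use the containment ${\rm Ann}_R(I)\cap{\rm Ann}_R(J)\subseteq{\rm Ann}_R(x)\cap{\rm Ann}_R(y)=\{0\}$ supplied by completeness of $\mathtt{ZA}(R)$. Your only addition is the careful choice of $x\in I\setminus J$ to guarantee $x\neq y$, a point the paper asserts without justification; the contradiction framing is cosmetic.
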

\begin{proof}
 Assume that $\mathtt{ZA}(R)$ is a complete graph.
Let $I,J$ be two nonzero proper ideals of $R$. Then there are two different nonzero nonunit elements $x,y\in R$
such that $x\in I$ and $y\in J$. Hence ${\rm Ann}_R(I)\cap {\rm Ann}_R(J)\subseteq {\rm Ann}_R(x)\cap {\rm Ann}_R(y)=\{0\}$.
Therefore $I$ and $J$ are adjacent.
\end{proof}

The following remark shows that the converse of Proposition \ref{com} is not true.
\begin{remark}
Consider the ring $R=\mathbb{Z}_5\times\mathbb{Z}_5$. By \cite[Theorem 6]{AAAS}, $\mathcal{A}_R(=K_2)$ is a complete graph.
But $\mathtt{ZA}(R)$ is a 4-regular graph that is not a complete graph.
$$
\begin{tikzpicture}
\vertex (a1) [fill] at (0,0) [label=left:{(1,0)}]{};
\vertex (a2) [fill]at (0,-0.8) [label=left:{(2,0)}]{};
\vertex (a3) [fill]at (0,-1.6) [label=left:{(3,0)}]{};
\vertex (a4) [fill]at (0,-2.4) [label=left:{(4,0)}]{};
\vertex (b1) [fill]at (0.8,0) [label=right:{(0,1)}]{};
\vertex (b2) [fill]at (0.8,-0.8) [label=right:{(0,2)}]{};
\vertex (b3) [fill]at (0.8,-1.6) [label=right:{(0,3)}]{};
\vertex (b4) [fill]at (0.8,-2.4) [label=right:{(0,4)}]{};
\path
(a1) edge (b1)
(a1) edge (b2)
(a1) edge (b3)
(a1) edge (b4)
(a2) edge (b1)
(a2) edge (b2)
(a2) edge (b3)
(a2) edge (b4)
(a3) edge (b1)
(a3) edge (b2)
(a3) edge (b3)
(a3) edge (b4)
(a4) edge (b1)
(a4) edge (b2)
(a4) edge (b3)
(a4) edge (b4);
\end{tikzpicture}
$$
\begin{center}
Fig. 1. $\mathtt{ZA}(R)$
\end{center}
\end{remark}

\begin{theorem}
Let $R$ be a ring. Then $\mathtt{ZA}(R)$ is a complete
graph if and only if one of the following conditions holds:
\begin{enumerate}
\item $R$ has exactly one nonzero nonunit element,
\item $R$ is an integral domain,
\item $R=\mathbb{Z}_2\times\mathbb{Z}_2$.
\end{enumerate}
\end{theorem}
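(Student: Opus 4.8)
The plan is to prove the two implications separately, the reverse implication being routine and the forward one requiring a dichotomy according to whether $R$ has a nontrivial idempotent. For the reverse direction: if $R$ has exactly one nonzero nonunit element $x$, then $\mathtt{ZA}(R)$ consists of the single vertex $x$ and is vacuously complete; if $R$ is an integral domain, then $\mathrm{Ann}_R(x)=\{0\}$ for every nonzero $x$, so any two distinct vertices are adjacent; if $R=\mathbb{Z}_2\times\mathbb{Z}_2$, a direct check of the two vertices $(1,0),(0,1)$ shows $\mathtt{ZA}(R)\cong K_2$. So it remains to handle the forward direction.

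Assume $\mathtt{ZA}(R)$ is complete and that neither (1) nor (2) holds. Then $R$ is not a field (otherwise it is a domain and (2) holds), so $R$ has at least two nonzero nonunit elements, and $R$ is not a domain; the goal is to deduce (3). I would first record the observation that drives everything: \emph{if $a$ is a vertex with $\mathrm{Ann}_R(a)\neq\{0\}$, then $a^2\in\{0,a\}$}. Indeed $\mathrm{Ann}_R(a)\subseteq\mathrm{Ann}_R(a^2)$, so if $a^2$ were a vertex different from $a$ we would have $\mathrm{Ann}_R(a)\cap\mathrm{Ann}_R(a^2)=\mathrm{Ann}_R(a)\neq\{0\}$, contradicting completeness, while $a^2$ cannot be a unit since $a$ is not.

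Then I would split into two cases. \textbf{Case 1: $R$ has a nontrivial idempotent $e$.} Write $R\cong A\times B$ with $A,B$ nonzero rings. If $|A|\geq 3$, choose $u\in A\setminus\{0,1\}$; then $(1,0)$ and $(u,0)$ are distinct vertices of $\mathtt{ZA}(A\times B)$ whose annihilators both contain the nonzero ideal $0\times B$, so they are not adjacent — contradicting completeness. Hence $|A|=2$, so $A\cong\mathbb{Z}_2$ (the unique two-element ring), and likewise $B\cong\mathbb{Z}_2$, giving (3). \textbf{Case 2: $R$ has no nontrivial idempotent.} Pick a nonzero zero-divisor $a$ (possible since $R$ is not a domain); by the observation $a^2=0$, since $a^2=a$ is excluded. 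For any $0\neq b\in\mathrm{Ann}_R(a)$, the same argument gives $b^2=0$, and if $b\neq a$ then $a\in\mathrm{Ann}_R(a)\cap\mathrm{Ann}_R(b)=\{0\}$, a contradiction; hence $\mathrm{Ann}_R(a)=\{0,a\}$. Now if $c$ is a nonzero nonunit with $c\neq a$, then adjacency of $a$ and $c$ forces $ac\neq 0$, while $a\cdot(ac)=a^2c=0$ puts $ac\in\mathrm{Ann}_R(a)=\{0,a\}$, so $ac=a$ and hence $c=a+1$; since $\mathrm{Ann}_R(a)$ is an ideal we get $2a=0$, whence $c^2=(a+1)^2=1$, contradicting that $c$ is a nonunit. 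Thus $a$ is the only nonzero nonunit element of $R$, contradicting the failure of (1); so Case 2 cannot occur, and (3) holds.

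The delicate part is Case 2: one must extract the rigidity $\mathrm{Ann}_R(a)=\{0,a\}$ and then carefully combine $ac=a$, $2a=0$, and $c=a+1$ to produce the unit $c^2=1$ that closes the argument. Case 1, by contrast, is comparatively soft and collapses at once via the ``$0\times B$'' trick.
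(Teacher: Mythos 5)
Your proposal is correct, but it follows a genuinely different route from the paper. The paper first proves that completeness of $\mathtt{ZA}(R)$ forces completeness of the co-annihilating ideal graph $\mathcal{A}_R$ (Proposition \ref{com}) and then imports the classification of rings with complete $\mathcal{A}_R$ from Akbari et al.\ (\cite[Theorem 6]{AAAS}): for $R$ not a domain, either $R$ has exactly one nonzero proper ideal or $R$ is a product of two fields, after which the two cases collapse to (1) and (3) exactly as in your Case 2 and Case 1 endgames. You instead give a self-contained elementary argument: the observation that completeness forces $a^2\in\{0,a\}$ for any vertex with nonzero annihilator, followed by a dichotomy on the existence of a nontrivial idempotent. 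Your Case 1 (the $0\times B$ trick) recovers the ``product of two fields, both $\mathbb{Z}_2$'' branch without ever invoking that $A$ and $B$ are fields, and your Case 2 re-derives from scratch the rigidity $\mathrm{Ann}_R(a)=\{0,a\}$ and the computation $ac=a$, $c=a+1$, $2a=0$, $c^2=1$ that the paper would obtain by citing the external structure theorem. All steps check out (in particular $a\in\mathrm{Ann}_R(a)\cap\mathrm{Ann}_R(b)$ kills any second annihilating element, and $c^2=1$ correctly contradicts $c$ being a nonunit). What your approach buys is independence from \cite{AAAS}; what the paper's buys is brevity and an explicit link between $\mathtt{ZA}(R)$ and $\mathcal{A}_R$ that it exploits again in the surrounding remarks.
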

\begin{proof}
$(\Rightarrow)$ Assume that $\mathtt{ZA}(R)$ is a complete graph. Then, by Proposition \ref{com},
$\mathcal{A}_R$ is a complete graph. Suppose that $R$ is not an integral domain. So there exists 
a nonzero nonunit element $x\in R$ such that ${\rm Ann}_R(x)\neq\{0\}$.
Therefore, \cite[Theorem 6]{AAAS} implies that
either $R$ has exactly one nonzero proper ideal or $R$ is a direct product of two fields. 
Suppose that the former case holds. If $y$ is a nonzero nonunit element of $R$
different from $x$, then $Rx=Ry$. So ${\rm Ann}_R(x)\cap {\rm Ann}_R(y)={\rm Ann}_R(x)\neq\{0\}$,
which is a contradiction. Therefore $R$ has exactly one nonzero nonunit element.
Now, let $R$ be a direct product of two fields, say $R= F_1\times F_2$. 
If there exist two different nonzero elements $u,v$ in $F_1$, then $(u,0)$ and $(v,0)$ cannot be
adjacent. Hence $F_1=\mathbb{Z}_2$. Similarly, we can show that $F_2=\mathbb{Z}_2$.
Consequently $R= \mathbb{Z}_2\times \mathbb{Z}_2$.\\
$(\Leftarrow)$ Clearly, if (1) or (2) holds, then  $\mathtt{ZA}(R)$ is a complete
graph. Assume that (3) holds. Then $\mathtt{ZA}(R)\simeq K_2$ and we are done.
\end{proof}

\section{When is $\mathtt{ZA}(R)$  $k$-regular?}

Recall that a finite field of order $q$ exists if and only if the order $q$ is a prime power $p^s$.
A finite field of order $p^s$ is denoted by $\mathbb{F}_{p^s}$.
\begin{theorem}
Let $R$ be a B\'{e}zout ring with $|{Max}(R)|<\infty$.
Then $\mathtt{ZA}(R)$ is a $k$-regular graph $(0<k<\infty)$ if and only if  $R\simeq\mathbb{F}_{k+1}\times\mathbb{F}_{k+1}$.
\end{theorem}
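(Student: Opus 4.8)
The plan is to treat the two implications separately: the ``if'' direction by direct inspection, the ``only if'' direction by reducing $R$ to a finite product of fields and then counting degrees. For the ``if'' direction, if $R\simeq\mathbb{F}_{k+1}\times\mathbb{F}_{k+1}$ then every vertex of $\mathtt{ZA}(R)$ has the form $(u,0)$ or $(0,v)$ with $u,v$ nonzero; two vertices $(u_1,0),(u_2,0)$ share the annihilator $\{0\}\times\mathbb{F}_{k+1}\neq\{0\}$ and so are non-adjacent, likewise any two of the second type, while $(u,0)$ and $(0,v)$ are always adjacent. Hence $\mathtt{ZA}(R)$ is the complete bipartite graph with both parts of size $|\mathbb{F}_{k+1}|-1=k$, which is $k$-regular.

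For the ``only if'' direction, assume $\mathtt{ZA}(R)$ is $k$-regular with $0<k<\infty$. Then $\delta(\mathtt{ZA}(R))=k>0$ and every vertex has finite degree, so by Theorem \ref{finite} the graph $\mathtt{ZA}(R)$ is finite; in particular no vertex lies in an infinite clique, so ${\rm Ann}_R(x)\neq\{0\}$ for every nonzero nonunit $x$ (otherwise $\{x^n : n\in\mathbb{N}\}$ would be an infinite clique, as observed earlier). Now the Jacobson-radical argument from the proof of Theorem \ref{finite} applies verbatim --- if $0\neq a\in{\rm Jac}(R)$, pick a neighbour $b$ of $a$, use the B\'ezout hypothesis to write $Ra+Rb=Rc$, and note ${\rm Ann}_R(c)={\rm Ann}_R(a)\cap{\rm Ann}_R(b)=\{0\}$, a contradiction --- so ${\rm Jac}(R)=\{0\}$ and the Chinese Remainder Theorem gives $R\simeq F_1\times\cdots\times F_n$ with each $F_i$ a field and $n=|{\rm Max}(R)|$. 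Since a field has no vertices, $n\geq 2$.

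It remains to pin down $n$ and the sizes of the $F_i$ by a degree count inside $F_1\times\cdots\times F_n$. There two distinct vertices $x,y$ are adjacent exactly when $Z(x)\cap Z(y)=\emptyset$, where $Z(x):=\{i : x_i=0\}$, and a tuple is a vertex iff $\emptyset\neq Z(x)\neq\{1,\dots,n\}$. Writing $q_i:=|F_i|-1$, the vertex $e_i$ having a nonzero entry in coordinate $i$ and zeros elsewhere has $Z(e_i)=\{1,\dots,n\}\setminus\{i\}$, so its neighbours are exactly the tuples with zero set $\{i\}$ and $\deg(e_i)=\prod_{j\neq i}q_j$. Regularity forces $\prod_{j\neq i}q_j=k$ for all $i$, hence $q_1=\cdots=q_n=:q$ (so every $F_i$ is finite) and $k=q^{n-1}$. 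If $n\geq 3$, consider $v=(0,1,1,\dots,1)$, whose neighbours are the tuples with first coordinate nonzero and at least one later coordinate zero, so $\deg(v)=q\bigl((q+1)^{n-1}-q^{n-1}\bigr)$; equating with $k=q^{n-1}$ and dividing by $q$ and then by $q+1$ gives $(q+1)^{n-2}=q^{n-2}$, impossible for $n\geq 3$. Thus $n=2$ and $|F_1|=|F_2|=q+1=k+1$, i.e.\ $R\simeq\mathbb{F}_{k+1}\times\mathbb{F}_{k+1}$.

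The essentially new content is the degree bookkeeping in the product of fields, and the main obstacle is extracting the clean contradiction for $n\geq 3$; one should be careful that the auxiliary tuples $e_i$ and $v$ genuinely are vertices (which holds once $n\geq 2$) and that the $F_i$ are forced to be finite rather than assumed so.
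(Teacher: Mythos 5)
Your proposal is correct and follows essentially the same route as the paper: rule out elements with zero annihilator via the infinite-clique observation, use the B\'ezout/Jacobson-radical argument of Theorem \ref{finite} to get $R\simeq F_1\times\cdots\times F_n$, force all $|F_i|$ equal by comparing degrees of the coordinate vectors, and kill $n\geq 3$ by a degree count at an auxiliary vertex. The only (cosmetic) difference is that the paper derives the $n\geq3$ contradiction from the neighbourhood of $(1,1,0,\dots,0)$ rather than your $(0,1,\dots,1)$; both counts are valid.
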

\begin{proof}
The `` if " part has a routine verification. Let $\mathtt{ZA}(R)$ be a $k$-regular graph $(0<k<\infty)$.
If ${\rm Ann}_R(x)=\{0\}$ for some nonzero nonunit element $x$ of $R$, then
$\{x^n|n\in\mathbb{N}\}$ is an infinite clique that is a contradiction.
Then, for every nonzero nonunit element $x$ of $R$ we have ${\rm Ann}_R(x)\neq\{0\}$.
Similar to the manner that described in the proof
of Theorem \ref{finite}, we have $R\simeq F_1\times F_2\times\cdots\times F_n$ where $F_i$'s are fields
and $n=|{\rm Max}(R)|$. Since
${\rm Ann}_R((1,0,\dots,0))=0\times F_2\times F_3\times\cdots\times F_n$
and
${\rm Ann}_R((0,1,0,\dots,0))=F_1\times 0\times F_3\times\cdots\times F_n$, 
then
$${\rm N}_{\mathtt{ZA}(R)}((1,0,\dots,0))=\{(0,u_2,\dots,u_n)|u_i\in F_i\backslash\{0\} \mbox{ for } 2\leq i\leq n\}$$ 
and
$${\rm N}_{\mathtt{ZA}(R)}((0,1,0,\dots,0))=\{(u_1,0,u_3,\dots,u_n)|u_i\in F_i\backslash\{0\} \mbox{ for } 1\leq i\leq n,~ i\neq2\}.$$ 
So
$$(|F_2|-1)(|F_3|-1)\cdots(|F_n|-1)=(|F_1|-1)(|F_3|-1)\cdots(|F_n|-1),$$
because $\mathtt{ZA}(R)$ is $k$-regular.
Hence $|F_1|=|F_2|$.
Similarly we can show that $|F_1|=|F_2|=\dots=|F_n|$.
Let $n\geq3$.
Note that ${\rm N}_{\mathtt{ZA}(R)}((1,1,0,\dots,0))$ is the union of the following sets
$$\{(u_1,0,u_3,\dots,u_n)|u_i\in F_i\backslash\{0\} \mbox{ for } 1\leq i\leq n, ~ i\neq2\},$$
$$\{(0,u_2,\dots,u_n)|u_i\in F_i\backslash\{0\} \mbox{ for } 2\leq i\leq n\}$$
and
$$\{(0,0,u_3,\dots,u_n)|u_i\in F_i\backslash\{0\} \mbox{ for } 3\leq i\leq n\}.$$
Therefore
$$(|F_1|-1)^{n-1}=2(|F_1|-1)^{n-1}+(|F_1|-1)^{n-2},$$
since $\mathtt{ZA}(R)$ is $k$-regular.
Thus $|F_1|=0$ which is a contradiction.
Consequently $n=2$. 
If there exist two different nonzero elements $u,u^{\prime}$ in $F_1$, then $(u,0)$ and $(u^{\prime},0)$ cannot be
adjacent. On the other hand for every nonzero elements $u\in F_1$ and $v\in F_2$, $(u,0)$ and $(0,v)$ are adjacent.
So ${\rm deg}_{\mathtt{ZA}(R)}((u,0))=|F_1|-1=k$. Therefore $R\simeq\mathbb{F}_{k+1}\times\mathbb{F}_{k+1}$.
\end{proof}

\begin{corollary}
Let $R$ be a B\'{e}zout ring with $|{\rm Max}(R)|<\infty$.
If $\mathtt{ZA}(R)$ is a $k$-regular graph $(0<k<\infty)$, then $k+1$ is a prime power. 
\end{corollary}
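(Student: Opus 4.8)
The plan is to derive this immediately from the preceding theorem, which already does all the substantive work. Since $R$ is a B\'ezout ring with $|{\rm Max}(R)|<\infty$ and $\mathtt{ZA}(R)$ is $k$-regular with $0<k<\infty$, that theorem yields a ring isomorphism $R\simeq\mathbb{F}_{k+1}\times\mathbb{F}_{k+1}$. The point to extract is simply that the first (equivalently, second) factor is a \emph{finite field of order} $k+1$; in particular a finite field of cardinality $k+1$ exists.

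First I would invoke the theorem to produce the isomorphism $R\simeq\mathbb{F}_{k+1}\times\mathbb{F}_{k+1}$. Then I would recall the classical fact, already stated just before the theorem, that a finite field of order $q$ exists if and only if $q$ is a prime power $p^s$. Applying this with $q=k+1$ forces $k+1$ to be a prime power, which is exactly the claim.

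I do not expect any genuine obstacle here: the corollary is a one-line consequence of the structure theorem together with the existence criterion for finite fields, so the only "step" is to phrase the deduction cleanly. If one wanted to be self-contained about why $\mathbb{F}_{k+1}$ has order $k+1$ rather than merely being denoted that way, one could note that the notation $\mathbb{F}_{k+1}$ was introduced precisely to denote a field of that order, so no further argument is needed.
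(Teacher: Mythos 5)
Your proposal is correct and matches the paper's intended argument exactly: the corollary is stated without proof as an immediate consequence of the preceding theorem, which gives $R\simeq\mathbb{F}_{k+1}\times\mathbb{F}_{k+1}$, combined with the fact (recalled just before that theorem) that a finite field of order $q$ exists if and only if $q$ is a prime power. Nothing further is needed.
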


\section{Chromatic number and clique number of $\mathtt{ZA}(R)$}


Recall that, a ring $R$ is said to be {\it reduced} if it has no nonzero
nilpotent elements. 
\begin{theorem}
If $R$ is a reduced Noetherian ring, then the chromatic number of $\mathtt{ZA}(R)$ is
infinite or $R$ is a direct product of finitely many fields. 
\end{theorem}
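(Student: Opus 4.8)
The plan is to prove the equivalent form: if $\chi(\mathtt{ZA}(R))$ is finite, then $R$ is a direct product of finitely many fields. The first step reuses the clique trick from Theorems \ref{first} and \ref{finite}. Suppose some nonzero nonunit $x\in R$ had ${\rm Ann}_R(x)=\{0\}$; then $x$ is a regular element, so ${\rm Ann}_R(x^i)=\{0\}$ for every $i$, the powers $x^i$ are pairwise adjacent in $\mathtt{ZA}(R)$, and they are pairwise distinct, for if $x^i=x^j$ with $i<j$ then $x^i(1-x^{j-i})=0$ and regularity of $x^i$ forces $x^{j-i}=1$, contradicting that $x$ is a nonunit. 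Thus $\{x^n:n\in\mathbb{N}\}$ would be an infinite clique, giving $\chi(\mathtt{ZA}(R))\geq\omega(\mathtt{ZA}(R))=\infty$, a contradiction. Hence, under our assumption, every nonzero nonunit of $R$ is a zero-divisor.

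Next I would translate this into ideal theory. Since $0\in{\rm Z}(R)$ and no unit is a zero-divisor, the previous step shows that ${\rm Z}(R)$ is precisely the set of nonunits of $R$, that is, ${\rm Z}(R)=\bigcup_{\mathfrak{m}\in{\rm Max}(R)}\mathfrak{m}$. Because $R$ is Noetherian one also has ${\rm Z}(R)=\bigcup_{P\in{\rm Ass}(R)}P$, and because $R$ is reduced the zero ideal is radical, so ${\rm Ass}(R)$ coincides with the finite set ${\rm Min}(R)=\{P_1,\dots,P_n\}$ of minimal primes, with $P_1\cap\cdots\cap P_n={\rm Nil}(R)=\{0\}$. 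Therefore ${\rm Z}(R)=P_1\cup\cdots\cup P_n$.

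Now take any maximal ideal $\mathfrak{m}$ of $R$. Then $\mathfrak{m}\subseteq{\rm Z}(R)=P_1\cup\cdots\cup P_n$, so prime avoidance gives $\mathfrak{m}\subseteq P_i$ for some $i$, and maximality forces $\mathfrak{m}=P_i$. Conversely, each $P_i$ lies in some maximal ideal, which we have just seen is a minimal prime, hence equals $P_i$. So ${\rm Max}(R)=\{P_1,\dots,P_n\}$: every maximal ideal is a minimal prime. These $P_i$ are pairwise comaximal (distinct maximal ideals) with $\bigcap_{i=1}^n P_i=\{0\}$, so the Chinese Remainder Theorem yields $R\simeq\prod_{i=1}^{n}R/P_i$, a direct product of finitely many fields.

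I expect the substantive part to be the middle steps: deducing from finiteness of $\chi$ that ${\rm Z}(R)$ is the whole set of nonunits, and then forcing each maximal ideal into a single minimal prime via prime avoidance. The reduced hypothesis is exactly what makes ${\rm Ass}(R)={\rm Min}(R)$ (so that a maximal ideal trapped inside an associated prime is genuinely minimal) and what makes the final intersection vanish, while Noetherianness supplies both the finiteness of ${\rm Min}(R)$ and the description of ${\rm Z}(R)$. The only degenerate case is $R$ a field, where $\mathtt{ZA}(R)$ is empty, $\chi=0$, and $R$ is trivially a product of one field; nothing else requires separate treatment.
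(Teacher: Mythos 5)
Your proof is correct and complete: the infinite-clique argument from powers of a regular element (with the distinctness check), the identification of the nonunits with $\mathrm{Z}(R)=\bigcup_{P\in\mathrm{Ass}(R)}P=\bigcup_{P\in\mathrm{Min}(R)}P$ using the reduced Noetherian hypotheses, prime avoidance to force each maximal ideal to equal a minimal prime, and the Chinese Remainder Theorem all go through. The paper itself only says the proof is ``similar to that of \cite[Theorem 16]{AAAS},'' and your argument is exactly the standard one being alluded to there, so you have in effect supplied the details the paper omits.
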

\begin{proof}
The proof is similar to that of \cite[Theorem 16]{AAAS}.
\end{proof}

\begin{lemma}\label{cap}

Let $P_1$ and $P_2$ be two prime ideals of a ring $R$ with $P_1\cap P_2=\{0\}$. Then every two 
nonzero elements $x\in P_1$ and $y\in P_2$ are adjacent.
\end{lemma}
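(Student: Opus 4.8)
The plan is to unwind the definition of adjacency in $\mathtt{ZA}(R)$: two nonzero nonunit elements are adjacent exactly when the intersection of their annihilators is $\{0\}$. So first I would check that $x$ and $y$ genuinely are (distinct) vertices of $\mathtt{ZA}(R)$. They are nonzero by hypothesis, and they are nonunits because each lies in the proper ideal $P_i$. Moreover $x \neq y$: if $x = y$, then $x \in P_1 \cap P_2 = \{0\}$, contradicting $x \neq 0$. The same computation shows $x \notin P_2$ and $y \notin P_1$, two observations that will be used in the main step.

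The heart of the argument is to take an arbitrary $r \in {\rm Ann}_R(x) \cap {\rm Ann}_R(y)$ and show $r = 0$. From $rx = 0 \in P_2$ and the primeness of $P_2$, together with $x \notin P_2$, I would deduce $r \in P_2$. Symmetrically, from $ry = 0 \in P_1$ and $y \notin P_1$, primeness of $P_1$ gives $r \in P_1$. Hence $r \in P_1 \cap P_2 = \{0\}$, so $r = 0$. This proves ${\rm Ann}_R(x) \cap {\rm Ann}_R(y) = \{0\}$, i.e. $x$ and $y$ are adjacent.

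There is no real obstacle here; the only point that needs a moment's care is the reduction showing $x \notin P_2$ and $y \notin P_1$, which is immediate from $P_1 \cap P_2 = \{0\}$ and $x, y \neq 0$. Everything else is a direct application of the defining property of prime ideals. If one wanted to phrase it even more slickly, one could note that ${\rm Ann}_R(x) \subseteq P_2$ for every nonzero $x \in P_1$ (since $P_2$ is prime and $x \notin P_2$), and dually ${\rm Ann}_R(y) \subseteq P_1$, so the intersection of the two annihilators sits inside $P_1 \cap P_2 = \{0\}$.
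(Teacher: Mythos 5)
Your proof is correct and follows essentially the same route as the paper's: take $r\in{\rm Ann}_R(x)\cap{\rm Ann}_R(y)$, use primeness of $P_2$ with $rx=0$ and $x\notin P_2$ to get $r\in P_2$, argue symmetrically for $P_1$, and conclude $r\in P_1\cap P_2=\{0\}$. The extra checks that $x,y$ are genuinely distinct nonunit vertices are a harmless (and slightly more careful) addition to what the paper records.
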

\begin{proof}
Suppose that $r\in {\rm Ann}_R(x)\cap {\rm Ann}_R(y)$. Since $rx=0\in P_2$ and $x\notin P_2$, then $r\in P_2$.
Similarly it turns out that $r\in P_1$. Hence $r\in P_1\cap P_2=\{0\}$.
\end{proof}

\begin{theorem}\label{clique}
Let $R$ be a ring and $n\geq2$ be a natural number. If either $|Min(R)|=n$ or $R=R_1\times R_2\times\cdots\times R_n$ where $R_i$'s are rings, then
$\omega(\mathtt{ZA}(R))\geq n$.
\end{theorem}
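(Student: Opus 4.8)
The plan is to handle the two hypotheses separately, since in each case one can exhibit an explicit clique of size $n$ in $\mathtt{ZA}(R)$. First I would treat the product case, which is the more transparent of the two. Suppose $R = R_1 \times R_2 \times \cdots \times R_n$ with each $R_i$ a ring (with nonzero identity). For $1 \le i \le n$ let $e_i$ be the element of $R$ with $1$ in every coordinate except the $i$-th, where it is $0$; that is, $e_i = (1,\dots,1,\overbrace{0}^{i},1,\dots,1)$. Each $e_i$ is a nonzero nonunit of $R$, so it is a vertex of $\mathtt{ZA}(R)$, and the $e_i$ are pairwise distinct. A direct computation gives ${\rm Ann}_R(e_i) = 0 \times \cdots \times 0 \times R_i \times 0 \times \cdots \times 0$ (the $i$-th coordinate block), and hence for $i \ne j$ we get ${\rm Ann}_R(e_i) \cap {\rm Ann}_R(e_j) = \{0\}$. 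Thus $\{e_1,\dots,e_n\}$ is a clique of size $n$ in $\mathtt{ZA}(R)$, giving $\omega(\mathtt{ZA}(R)) \ge n$.

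Next I would treat the case $|{\rm Min}(R)| = n$, reducing it to Lemma~\ref{cap} in the natural way. Let $P_1, \dots, P_n$ be the distinct minimal prime ideals of $R$. The first subtlety is to produce, for each $i$, a nonzero element $x_i$ lying in $P_i$ but outside $\bigcup_{j \ne i} P_j$; prime avoidance gives an element of $P_i$ not in any $P_j$ with $j \ne i$, and such an element is automatically nonzero since $0$ lies in every $P_j$ (indeed $\bigcap_j P_j$ is the nilradical, which is contained in every $P_j$). One must also check $x_i$ is a nonunit: since $x_i \in P_i$ and $P_i$ is a proper ideal, $x_i$ is not a unit. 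So each $x_i$ is a vertex of $\mathtt{ZA}(R)$, and the $x_i$ are pairwise distinct because $x_i \in P_i$ while $x_j \notin P_i$ for $j \ne i$. Now Lemma~\ref{cap} does not literally apply, since it assumes $P_1 \cap P_2 = \{0\}$; instead I would re-run its short argument directly: if $r \in {\rm Ann}_R(x_i) \cap {\rm Ann}_R(x_j)$ with $i \ne j$, then $r x_i = 0 \in P_j$ and $x_i \notin P_j$ force $r \in P_j$, and symmetrically $r \in P_i$, so $r \in P_i \cap P_j \subseteq P_i$. Repeating for all $j$ shows $r$ lies in every minimal prime, hence $r$ is nilpotent; but then some power argument—or rather, using that $r \in \bigcap_k P_k = {\rm Nil}(R)$ together with $r x_i = 0$—does not quite close things unless $R$ is reduced. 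This is the main obstacle: Lemma~\ref{cap} as stated requires the pairwise intersections to be zero, which corresponds to $R$ reduced with exactly $n$ minimal primes.

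The honest fix is to observe that the intended reading of the hypothesis ``$|{\rm Min}(R)| = n$'' is accompanied by the assumption that the relevant intersections vanish, or to simply invoke Lemma~\ref{cap} pairwise after noting that in the reduced case $P_i \cap P_j$ need not be zero for $n \ge 3$ either—so in fact the cleanest route is the direct annihilator computation above, which shows $r \in \bigcap_{j \ne i} P_j$; combined with $r x_i = 0$ this still need not give $r = 0$ in general. I would therefore present the product case in full, and for the minimal-primes case cite Lemma~\ref{cap} in the spirit of its proof, producing the clique $\{x_1, \dots, x_n\}$ under the understanding (consistent with how Lemma~\ref{cap} is phrased and used) that pairwise intersections of the chosen primes are trivial; the degenerate possibilities are absorbed into the inequality $\omega \ge n$ rather than an equality, so even a clique among a subcollection suffices once one checks the generic configuration gives all $n$. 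In writing this up I would lead with the product construction, then the minimal-prime construction, and flag the reduced hypothesis precisely where Lemma~\ref{cap} is invoked.
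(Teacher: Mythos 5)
Your treatment of the product case is correct and is exactly the paper's argument: the elements $(1,\dots,1,0,1,\dots,1)$ form a clique of size $n$. The problem is the minimal-primes case, which your write-up does not actually prove: after correctly observing that Lemma~\ref{cap} does not apply as stated (its hypothesis $P_i\cap P_j=\{0\}$ is not implied by $|{\rm Min}(R)|=n$ --- it already fails for $R=k\times k\times k$), and after correctly computing that an element $r\in{\rm Ann}_R(x_i)\cap{\rm Ann}_R(x_j)$ must lie in every minimal prime and hence in ${\rm Nil}(R)$, you trail off into remarks about ``intended readings'' instead of closing the argument. For what it is worth, the paper's own proof of this case is a bare citation of Lemma~\ref{cap} and suffers from exactly the defect you identified, so you have located a genuine gap in the paper rather than missed an idea it supplies.

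Your suspicion that reducedness is needed is correct, and in fact the minimal-primes branch is false without it. Take $R=k[[u,v]]/(u^2v,uv^2)$. Its minimal primes are $(u)$ and $(v)$, so $|{\rm Min}(R)|=2$; but $uv\neq0$ in $R$ while $uv\cdot\mathfrak{m}=(u^2v,uv^2)=0$ for the maximal ideal $\mathfrak{m}=(u,v)$, so $uv\in{\rm Ann}_R(x)\cap{\rm Ann}_R(y)$ for all nonunits $x,y$, the graph $\mathtt{ZA}(R)$ has no edges, and $\omega(\mathtt{ZA}(R))=1<2$. If one adds the hypothesis that $R$ is reduced, your own computation finishes the proof: each $P_i$ is nonzero (otherwise $R$ is a domain with a unique minimal prime), prime avoidance gives $x_i\in P_i\setminus\bigcup_{j\neq i}P_j$, these are distinct nonzero nonunits, and any $r\in{\rm Ann}_R(x_i)\cap{\rm Ann}_R(x_j)$ lies in $\bigcap_k P_k={\rm Nil}(R)=\{0\}$, so $\{x_1,\dots,x_n\}$ is a clique. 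Replace your final paragraph with either that corrected statement and argument or with the counterexample; as written it is not a proof.
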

\begin{proof}
 Assume that 
${\rm Min}(R)=\{\mathfrak{p}_1,\mathfrak{p}_2,\dots,\mathfrak{p}_n\}$ where $\mathfrak{p}_i$'s are nonzero.
So, by Lemma \ref{cap}, $n\leq\omega(\mathtt{ZA}(R))$.
Now, suppose that $R=R_1\times R_2\times\cdots\times R_n$ where $R_i$'s are rings.
Then $\{(1,\dots,1,\overbrace{0}^{i-\mbox{th}},1,\dots,1)|1\leq i\leq n\}$ is a clique in $\mathtt{ZA}(R)$ and the result follows.
\end{proof}

\vspace{5mm} \noindent \footnotesize 
\begin{minipage}[b]{10cm}
Hojjat Mostafanasab \\
Department of Mathematics and Applications, \\ 
University of Mohaghegh Ardabili, \\ 
P. O. Box 179, Ardabil, Iran. \\
Email: h.mostafanasab@gmail.com
\end{minipage}\\

\end{document}